\documentclass[makeidx,a4paper,11pt]{article}
\usepackage{makeidx}
\usepackage{amssymb , amsmath, amsthm}
\oddsidemargin 0cm
\evensidemargin 0cm
\parindent 0cm
\textwidth 16cm

\newtheorem{defi}{Definition} 
\newtheorem{thm}[defi]{Theorem}

\newtheorem{prop}[defi]{Proposition}
\newtheorem{lemme}[defi]{Lemma}
\newtheorem{cor}[defi]{Corollary}
 
\newcommand{\twosystem}[2]{\left\{\begin{aligned} &#1\\ &#2\end{aligned}\right.}

\newcommand{\parte}[1]{\smallskip\noindent {\rm#1)}\,\,}

\newcommand{\dotp}[2]{\langle{#1},{#2}\rangle}
\newcommand{\scal}[2]{\langle{#1},{#2}\rangle}
\newcommand{\starred}[1]{{#1}^{\star}}

\newcommand{\Cal }[1]{{\mathcal {#1}}}
\newcommand{\bd}{\partial}
\newcommand{\abs}[1]{\lvert{#1}\rvert}
\newcommand{\norm}[1]{\lVert{#1}\rVert}

\newcommand{\reals}{{\bf R}}
\newcommand{\real}[1]{{\bf R}^{#1}}
\newcommand{\sphere}[1]{{\bf S}^{#1}}
\newcommand{\hyp}[1]{{\bf H}^{#1}}

\newcommand{\Vol}{{\rm Vol}}

\newcommand{\isoper}{\dfrac{{\rm Vol}(\Sigma)}{{\rm Vol}(\Omega)}}
\newcommand{\isoperi}{{\rm Vol}(\Sigma)/{\rm Vol}(\Omega)}
\newcommand{\vol}{{\rm Vol}}

\newcommand{\rough}[1]{\nabla^{\star}\nabla#1}

\newcommand{\derive}[2]{\dfrac{\partial{#1}}{\partial{#2}}}
\newcommand{\derives}[2]{\partial{#1}/\partial{#2}}

\newcommand{\ball}{\mathbf{B}^{n+1}}

\begin{document}

\title{On the first eigenvalue of the Dirichlet-to-Neumann operator on forms\footnote{Classification AMS $2000$: 58J50, 35P15 \newline 
Keywords: Manifold with boundary, Differential forms, Eigenvalue, Sharp bounds}} 
\author{ S. Raulot and A. Savo}
\date{\today}
 
\maketitle

\begin{abstract}
We study a Dirichlet-to-Neumann eigenvalue problem for differential forms on a compact Riemannian manifold with smooth boundary. This problem is  a natural generalization 
of the classical Steklov problem on functions. We derive a number of upper and lower bounds for the first eigenvalue in several 
contexts: many of these estimates will be sharp, and for some of them we characterize equality. We also relate these new eigenvalues with those of other 
operators, like the Hodge Laplacian or the biharmonic Steklov operator. 

\end{abstract}

\large²


\section{Introduction}


Let $\Omega$ be a compact, connected $(n+1)-$dimensional Riemannian domain with smooth boundary $\Sigma^n$. The Dirichlet-to-Neumann operator $T$, also 
called \it Steklov operator, \rm acts on smooth functions on $\Sigma$ in the following way. If $f\in C^{\infty}(\Sigma)$ 
and $\hat f$ denotes the unique harmonic extension of $f$ to $\Omega$, then:
\begin{equation}\label{classical}
Tf=-\derive{\hat f} N,
\end{equation}
where $N$ is the inner unit normal vector field on $\Sigma$. $T$ defines a pseudo-differential operator on $C^{\infty}(\Sigma)$ which is known to be 
elliptic and self-adjoint; hence  $T$  has a discrete spectrum  $0=\nu_1<\nu_2\leq\nu_3\leq\dots$.  Note that the lowest eigenvalue is $\nu_1=0$, 
corresponding to the constant eigenfunctions; therefore, in our convention, the first positive eigenvalue of $T$ will be denoted by $\nu_2$. There is a vast 
literature on eigenvalue estimates for the operator $T$; directly related to our paper are the estimates given in \cite{Es} and \cite{Es1}. 

\smallskip

In this paper, we consider a natural extension of the Dirichlet-to-Neumann operator $T$  to an elliptic operator $T^{[p]}$ acting on differential forms of arbitrary degree
$p$ on the boundary $\Sigma$ and then prove some geometric lower bounds for its first eigenvalue, given in terms of the second fundamental form of the 
boundary. We then estimate these new eigenvalues from above in terms of the isoperimetric ratio $\isoperi$, and in terms of the eigenvalues of other 
differential operators, like the Hodge-Laplace operator on the boundary $\Sigma$ and the biharmonic Steklov operator. In some cases we improve some known 
estimates. The operator $T^{[p]}$ seems to have interesting spectral properties which, we hope, justify the present work. 

\smallskip

In the rest of the introduction we state the main results of the paper. 


\subsection{The definition of $T^{[p]}$} \label{def}


Let $\omega$ be a form of degree $p$ on $\Sigma^n$, with $p=0,1,\dots,n$. Then there exists a unique $p$-form $\hat\omega$ on $\Omega$ such that:
$$
\twosystem
{\Delta\hat\omega=0}
{\starred J\hat\omega=\omega,\,\,i_N\hat\omega=0,}
$$
where $\starred J$ denotes the restriction of $\hat\omega$ to $\Sigma$, and $i_N$ is the interior product of $\hat\omega$ with the inner unit normal vector 
field $N$. The form $\hat\omega$ will be called the {\it harmonic tangential extension of $\omega$}. Its existence and uniqueness is proved, for example, in 
Schwarz \cite{schwarz}. We set:
$$
T^{[p]}\omega=-i_Nd\hat\omega,
$$
and then we have a linear operator $T^{[p]}:\Lambda^p(\Sigma)\to \Lambda^p(\Sigma)$, {\it the (absolute) Dirichlet-to-Neumann operator}, which reduces to the classical 
Dirichlet-to-Neumann operator acting on functions when $p=0$, so that $T^{[0]}=T$. Here $\Lambda^p(\Sigma)$ denotes the vector bundle of differential $p$-forms on 
$\Sigma$.

\smallskip

We observe in Section \ref{basic} that $T^{[p]}$ is an elliptic self-adjoint pseudo-differential operator, with discrete spectrum
$$
\nu_{1,p}(\Omega)\leq\nu_{2,p}(\Omega)\leq\dots
$$
Moreover, $T^{[p]}$ is non-negative so that $\nu_{1,p}(\Omega)\geq 0$. Actually, it follows easily from the definition that ${\rm Ker}T^{[p]}$ is isomorphic 
to $H^p(\Omega)$, the $p$-th absolute de Rham cohomology space of $\Omega$ with real coefficients. Therefore:

\medskip

$-$ a positive lower bound of $\nu_{1,p}(\Omega)$ will imply in particular  that $H^p(\Omega)=0$;

$-$ a positive upper bound of $\nu_{1,p}(\Omega)$ will be significant only when $H^p(\Omega)=0$.

\medskip

As $\Omega$ is connected, we see that $H^0(\Omega)$ is  $1-$dimensional.  Therefore, in our notation, $\nu_{1,0}(\Omega)=0$ and $\nu_{2,0}(\Omega)=\nu_2$ is 
the first positive eigenvalue of the classical problem \eqref{classical}. 

Finally, using the Hodge star operator, we define  a dual operator $T^{[p]}_D$, also acting on $\Lambda^p(\Sigma)$; in particular, the dual of  $T^{[n]}$ 
defines an operator $T^{[0]}_D$ acting on $C^{\infty}(\Sigma)$ and different from the classical Dirichlet-to-Neumann operator $T$ (see Section \ref{dual} for details).

\smallskip

The operator $T^{[p]}$ belongs to a family of operators depending on a complex parameter $z$, introduced by G. Carron in \cite{carron} (see the proof of 
Theorem \ref{FirstProp}). Other Dirichlet to Neumann operators acting on differential forms, but different from $T^{[p]}$, were introduced by Joshi and Lionheart in 
\cite{joshi}, and Belishev and Sharafutdinov in \cite{belishev}. In the preprint \cite{shara}, the operator 
$\star_{\Sigma} T^{[p]}: \Lambda^{p}(\Sigma)\to \Lambda^{n-p}(\Sigma)$ appears in a certain matrix decomposition of the Joshi and Lionheart operator. 
None of these works, however, discuss eigenvalue estimates.


\subsection{Lower bounds by the extrinsic geometry} 


First, some notations. Fix a point $x\in\Sigma$ and let $\eta_1(x),\dots,\eta_n(x)$ be the principal curvatures of $\Sigma^n$ at $x$ (our sign convention is 
that the principal curvatures of the unit ball in $\real{n+1}$ are positive). The {\it $p$-curvatures} of $\Sigma$ are, by definition, all possible sums 
$\eta_{j_1}(x)+\dots+\eta_{j_p}(x)$ for $j_1,\dots,j_p\in\{1,\dots,n\}$. Arrange the sequence of principal curvatures so that it is non-decreasing:
$\eta_1(x)\leq\dots\leq\eta_n(x)$, and call
$$
\sigma_p(x)\doteq\eta_1(x)+\dots+\eta_p(x)
$$
the \it lowest $p$-curvature at $x$. \rm We say that $\Sigma$ is \it $p$-convex \rm if $\sigma_p(x)\geq 0$ for all $x\in\Sigma$, and let
$$
\sigma_p(\Sigma)=\inf_{x\in\Sigma}\sigma_p(x).
$$
Note that $1$-convex means, simply, {\it convex} (all principal curvatures are non-negative) and $n$-convex means that $\Sigma$ has non-negative mean curvature 
because, by definition, $\sigma_n(\Sigma)=nH$, where $H$ is a lower bound of the mean curvature of $\Sigma$. Finally, it is clear from the definition 
that, if $\Sigma$ is $p-$convex, then it is $q-$convex for all $q\geq p$.

\smallskip

Recall that, if $\omega$ is a $p$-form on $\Omega^{n+1}$, the Bochner formula gives
$$
\Delta\omega=\rough\omega+W^{[p]},
$$
where $W^{[p]}$ is a symmetric endomorphism acting on $\Lambda^{p}(\Omega)$, called the {\it Bochner curvature term}. One knows that $W^{[1]}={\rm Ric}$, 
the Ricci tensor, hence $W^{[1]}\geq 0$ provided that $\Omega$ has nonnegative Ricci curvature.

From the work of Gallot and Meyer (see \cite{gallot-meyer}) we also know that, if $\gamma$ is a lower bound of the eigenvalues of the Riemann curvature 
operator (seen as a symmetric endomorphism of $\Lambda^2(\Omega)$), then $W^{[p]}\geq p(n+1-p)\gamma$. Hence

\medskip

$-$ if the curvature operator of $\Omega$ is nonnegative then $W^{[p]}\geq 0$ for all degrees $p$. 

\medskip

However, the condition $W^{[p]}\geq 0$ is sometimes much weaker than assuming the positivity of the curvature operator.
\begin{thm}\label{lowerbound} 
Let $p=1,\dots,n$. Assume that $\Omega^{n+1}$ satisfies $W^{[p]}\geq 0$ and that $\Sigma$ is strictly $p$-convex, that is $\sigma_p(\Sigma)>0$.
\item (a) If $p<\dfrac{n+1}2$ then $\nu_{1,p}(\Omega)> \dfrac{n-p+2}{n-p+1}\sigma_p(\Sigma)$.The equality never holds.

\item (b) If $p\geq\dfrac{n+1}2$ then
\begin{equation}\label{lower}
\nu_{1,p}(\Omega)\geq \frac{p+1}p\sigma_p(\Sigma),
\end{equation}
which is an equality when $\Omega$ is a ball in the Euclidean space $\real{n+1}$.
\end{thm}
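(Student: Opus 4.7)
The plan is to derive a Reilly-type integral identity for the harmonic tangential extension $\hat\omega$ of a first eigenform, and then to extract the sharp constant via a pointwise Kato-type inequality whose two natural coefficients $(p+1)$ and $(n-p+2)$ account for the dichotomy in the statement.

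First, I would set up the main identity. Let $\omega$ be a first eigenform of $T^{[p]}$ with eigenvalue $\nu:=\nu_{1,p}(\Omega)$ and let $\hat\omega$ be its harmonic tangential extension, so that $\Delta\hat\omega=0$, $i_N\hat\omega=0$ and $-i_Nd\hat\omega=\nu\omega$ along $\Sigma$. Pairing the Bochner formula $\nabla^*\nabla\hat\omega+W^{[p]}\hat\omega=0$ with $\hat\omega$ and integrating over $\Omega$ produces a boundary term $-\int_\Sigma\langle\nabla_N\hat\omega,\hat\omega\rangle$. The Weingarten-type identity (valid because $i_N\hat\omega=0$, with $S^{[p]}$ the natural extension of the shape operator to $p$-forms)
\[
\nabla_N\hat\omega\big|_\Sigma \;=\; i_N d\hat\omega + S^{[p]}\omega
\]
then rewrites this boundary term as a combination of $\nu\int_\Sigma|\omega|^2$ and $\int_\Sigma\langle S^{[p]}\omega,\omega\rangle$, leading to the fundamental identity
\[
\nu\int_\Sigma|\omega|^2 \;=\; \int_\Omega|\nabla\hat\omega|^2 + \int_\Omega W^{[p]}(\hat\omega,\hat\omega) + \int_\Sigma\langle S^{[p]}\omega,\omega\rangle.
\]

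Next, since the eigenvalues of $S^{[p]}$ on $\Lambda^pT_x^*\Sigma$ are exactly the $p$-curvatures of $\Sigma$ at $x$, the smallest is $\sigma_p(x)\geq\sigma_p(\Sigma)$, so $\langle S^{[p]}\omega,\omega\rangle\geq\sigma_p(\Sigma)|\omega|^2$ pointwise. Combined with $W^{[p]}\geq 0$ this gives only the weaker bound $\nu\geq\sigma_p(\Sigma)$; to upgrade the factor $1$ into $\frac{p+1}{p}$ or $\frac{n-p+2}{n-p+1}$, one needs to lower-bound $\int_\Omega|\nabla\hat\omega|^2$ in terms of $\int_\Sigma|\omega|^2$. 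The key input is the pointwise Kato inequality
\[
|d\hat\omega|^2+|d^*\hat\omega|^2 \;\leq\; c\,|\nabla\hat\omega|^2, \qquad c:=\max(p+1,\,n-p+2),
\]
which follows from the irreducible splitting $T^*\Omega\otimes\Lambda^p\Omega\simeq\Lambda^{p+1}\oplus\Lambda^{p-1}\oplus K$ together with the relations $|d\hat\omega|^2=(p+1)|\pi_{p+1}(\nabla\hat\omega)|^2$ and $|d^*\hat\omega|^2=(n-p+2)|\pi_{p-1}(\nabla\hat\omega)|^2$. Coupled with the auxiliary identity $\int_\Omega(|d\hat\omega|^2+|d^*\hat\omega|^2)=\nu\int_\Sigma|\omega|^2$, obtained by integration by parts using $\Delta\hat\omega=0$ and $i_N\hat\omega=0$, this yields $\int_\Omega|\nabla\hat\omega|^2\geq\frac{\nu}{c}\int_\Sigma|\omega|^2$. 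Plugging back into the fundamental identity produces $\nu\geq\frac{c}{c-1}\sigma_p(\Sigma)$. The value of $c$ (hence of the constant) equals $p+1$ precisely when $p\geq(n+1)/2$ and $n-p+2$ otherwise, which is exactly the split in the statement.

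For the equality on $\ball\subset\real{n+1}$ in case (b): every term is sharp, with $\sigma_p=p$, $W^{[p]}\equiv 0$, the pointwise $S^{[p]}$-estimate saturated trivially (since all $\eta_i=1$), and a direct construction of a first eigenform (e.g.\ arising via Hodge duality from a constant-coefficient ambient vector field) gives $d^*\hat\omega\equiv 0$, which is the Kato equality condition when $c=p+1$, and $\nu=p+1$. For the strict inequality in (a), the Kato equality would instead force $d\hat\omega\equiv 0$ everywhere in $\Omega$ (the case $c=n-p+2$), which implies $T^{[p]}\omega=-i_Nd\hat\omega\equiv 0$ and hence $\nu=0$, contradicting $\nu\geq\frac{n-p+2}{n-p+1}\sigma_p(\Sigma)>0$; this incompatibility is what enforces strictness.

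The main obstacle is the pointwise Kato inequality with the correct sharp constant $\max(p+1,n-p+2)$ and the careful bookkeeping of boundary terms in the integration-by-parts identities; once those are in place, the rest is a short algebraic manipulation of the Rayleigh quotient.
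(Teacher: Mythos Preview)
Your proposal is correct and follows essentially the same route as the paper. Your ``fundamental identity'' obtained by integrating the Bochner formula and using the Weingarten relation is precisely the Reilly formula for differential forms (applied to a tangential form), and your ``Kato inequality'' $|d\hat\omega|^2+|\delta\hat\omega|^2\le c\,|\nabla\hat\omega|^2$ with $c=\max(p+1,n-p+2)$ is the crude form of the Gallot--Meyer inequality $\|\nabla\hat\omega\|^2\ge \frac{\|d\hat\omega\|^2}{p+1}+\frac{\|\delta\hat\omega\|^2}{n-p+2}$, which the paper cites directly; the remaining algebra and the analysis of the equality/strictness cases match the paper's argument line by line. One cosmetic remark: your Weingarten identity $\nabla_N\hat\omega|_\Sigma=i_Nd\hat\omega+S^{[p]}\omega$ is literally true only for the tangential component (after applying $J^\star$), but since you pair it with the tangential form $\omega$ this is harmless.
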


{\bf Remark.} Note that under the given curvature assumptions we have in particular $H^q(\Omega)=0$ for all $q\geq p$; so, the $p$-convexity has interesting 
topological consequences. This is not new:  in \cite{wu}  it was proved by other methods that, if $\sigma_p(\Sigma)>0$  and the sectional curvatures of 
$\Omega$ are non-negative, then $\Omega$ has the homotopy type of a $CW-$complex with cells only in dimensions $\leq  p-1$. For a result in negative 
curvature we refer to \cite{savo}: in particular, if $\Omega$ is a $p-$convex domain in ${\bf H}^n$ then $H^p(\Omega)=0$ for all $q\geq p$, provided that 
$p>(n+1)/2$.

\smallskip
 
The proof of Theorem \ref{lowerbound} uses a Reilly-type formula for differential forms, proved in \cite{raulotsavo}. We characterize  the equality in \eqref{lower} in the following two 
cases: when $p=n$ and when $p>(n+1)/2$ and $\Omega$ is a Euclidean domain. Precisely:  
\begin{thm}\label{equalityone}
Assume that $\Omega$ has non-negative Ricci curvature and mean-convex boundary. Then
$$
\nu_{1,n}(\Omega)\geq (n+1)H,
$$
where $H$ is a lower bound of the mean curvature. If $n\geq 2$, equality holds if and only if $\Omega$ is a Euclidean ball. 
\end{thm}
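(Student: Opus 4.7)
The lower bound $\nu_{1,n}(\Omega)\ge (n+1)H$ is a direct specialization of Theorem~\ref{lowerbound}(b) at $p=n$. Hodge duality on $\Omega^{n+1}$ sends $n$-forms to $1$-forms and conjugates $W^{[n]}$ to $W^{[1]}=\mathrm{Ric}$, so $\mathrm{Ric}\ge 0$ yields $W^{[n]}\ge 0$; and by definition $\sigma_n(x)=\eta_1(x)+\cdots+\eta_n(x)=nH(x)\ge nH$. Substituting into \eqref{lower} gives $\nu_{1,n}(\Omega)\ge \frac{n+1}{n}\cdot nH=(n+1)H$. The ``ball $\Rightarrow$ equality'' direction is already part of Theorem~\ref{lowerbound}(b).

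For the equality case, let $\omega$ be a non-trivial eigenform with eigenvalue $(n+1)H$, $\hat\omega$ its harmonic tangential extension, and set $\alpha:=\star_\Omega\hat\omega$: a harmonic $1$-form on $\Omega$ satisfying $J^{\star}\alpha=0$ on $\Sigma$ (this dualizes the condition $i_N\hat\omega=0$). The strategy is to revisit the Reilly-type integral identity of \cite{raulotsavo} underlying Theorem~\ref{lowerbound}(b) and track when each intermediate inequality saturates. Beyond forcing $W^{[n]}(\hat\omega,\hat\omega)\equiv 0$, the Cauchy--Schwarz step should make $\nabla\alpha$ pure-trace, i.e.\ $\nabla\alpha=\varphi\,g$ for some scalar $\varphi$ on $\Omega$, and the eigenvalue equation $T^{[n]}\omega=(n+1)H\omega$ should pin $\varphi$ down at the boundary to a non-trivial value (so that $\alpha$ is a closed conformal, non-parallel $1$-form).

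Once this is in hand, the rigidity is algebraic. Symmetry of $\nabla\alpha$ gives $d\alpha=0$; tracing gives $\delta\alpha=-(n+1)\varphi$; harmonicity $0=\Delta\alpha=-(n+1)d\varphi$ forces $\varphi\equiv c$ constant on the connected $\Omega$. On local open sets $\alpha=du$ with $\nabla^2u=c\,g$; the classical Tashiro rigidity theorem for Riemannian metrics admitting a Hessian equation $\nabla^2u=c\,g$ with $c$ constant identifies the metric as a Euclidean warped product $dr^2+r^2\,g_N$, hence flat. The boundary condition $J^{\star}\alpha=0$ says $u|_\Sigma$ is locally constant, so each component of $\Sigma$ is a concentric round sphere; the mean convexity rules out annular regions, and the lower mean curvature bound $H$ fixes the radius as $1/H$, so $\Omega$ is the Euclidean ball of radius $1/H$.

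The main difficulty will be the middle step: extracting the pointwise conformal-Killing equation $\nabla\alpha=\varphi g$ from the equality case of the Reilly identity, and confirming that $\varphi$ is genuinely non-trivial (ruling out the case when $\alpha$ is merely parallel, which is geometrically incompatible with the boundary condition). The assumption $n\ge 2$ enters in the last step, where the Tashiro/Obata warped-product identification requires a genuinely $n$-dimensional cross-section.
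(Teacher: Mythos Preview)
Your derivation of the lower bound matches the paper exactly. For the equality case, the paper takes a different and shorter route than your Hessian/Tashiro strategy. From the equality analysis (Proposition~\ref{equalitycase}) one has, just as you do, that $\hat\omega$ is a Killing $n$-form; then $d\hat\omega=f\,\Psi_\Omega$ with $f$ constant, and after normalizing $f=1$ one reads off $\norm{\omega}^2=1/((n+1)^2H^2)$ on $\Sigma$. A one-line Stokes computation then gives $\vol(\Sigma)/\vol(\Omega)=(n+1)H$, and the paper concludes by invoking Ros's inequality \cite{ros} (valid under $\mathrm{Ric}\ge 0$ and mean-convexity), whose equality case characterizes the Euclidean ball. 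So the external rigidity ingredient is Ros, not Tashiro.

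Your dual approach via $\alpha=\star\hat\omega$ is correct through the equation $\nabla\alpha=c\,g$ with $c$ constant and $J^\star\alpha=0$ (indeed the step you call the ``main difficulty'' is straightforward: Killing $n$-form dualizes to closed conformal Killing $1$-form, and harmonicity forces $\delta\alpha$ constant). The genuine soft spot is the final step. Tashiro's theorem is stated for \emph{complete} manifolds, so it does not apply to $\Omega$ as a black box; moreover, writing $\alpha=du$ globally requires $H^1(\Omega)=0$, which is not among the hypotheses (only $H^1_R(\Omega)=0$ follows from $\nu_{1,n}>0$). The conclusion you want does hold, but it needs an explicit argument: show that $\abs{\alpha}^2$ has a unique interior minimum $p_0$ (all its critical points are nondegenerate minima since $\nabla^2\abs{\alpha}^2=2c^2g$), that radial geodesics from $p_0$ remain in $\Omega$ until they hit $\Sigma$ (because $u\circ\gamma(t)=\tfrac{c}{2}t^2+u(p_0)$ and $u|_\Sigma$ is constant), that the shape operators of the geodesic spheres are $\tfrac{1}{r}\mathrm{Id}$ so $g=dr^2+r^2h_1$, and that smoothness at $p_0$ forces $h_1$ round. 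This is all doable and yields a self-contained alternative to the paper's proof, but it is considerably more than your proposal acknowledges; the paper's appeal to Ros sidesteps all of it.
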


\begin{thm}\label{equalitytwo}
If $p>\frac{n+1}2$ and $\Omega$ is a Euclidean domain, then  we have equality in \eqref{lower} if and only if $\Omega$ is a ball. 
\end{thm}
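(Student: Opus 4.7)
The plan is first to verify the ``if'' direction by a direct computation on the Euclidean ball. If $\Omega=\ball$ is the unit ball in $\real{n+1}$, then $\Sigma=\sphere{n}$ has all principal curvatures equal to $1$, so $\sigma_p(\Sigma)=p$ and the lower bound of Theorem~\ref{lowerbound}(b) reads $\nu_{1,p}(\ball)\geq p+1$. One checks that equality $\nu_{1,p}(\ball)=p+1$ is attained by exhibiting an explicit harmonic tangential extension realizing this eigenvalue: take $\hat\omega$ built from the interior product of the position vector field with a constant coclosed form on $\real{n+1}$ (equivalently, use the spherical-harmonic decomposition of forms on $\sphere{n}$). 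A scaling argument extends the equality to balls of any radius.

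For the converse I would revisit the proof of Theorem~\ref{lowerbound}(b) --- which rests on the Reilly-type integral formula of~\cite{raulotsavo} --- and trace the equality cases in the Euclidean setting. Since $W^{[p]}\equiv 0$ in $\real{n+1}$, the formula reduces to a chain of inequalities whose saturation forces two conditions simultaneously: (i) the harmonic tangential extension $\hat\omega$ of an eigenform $\omega$ for $\nu_{1,p}$ satisfies a rigid first-order equation on $\Omega$ (a conformal-Killing-type condition that pins down the pointwise form of $\nabla\hat\omega$); and (ii) at every boundary point $x\in\Sigma$ where $\omega(x)\neq 0$, the pointwise inequality $\scal{S^{[p]}\omega}{\omega}\geq\sigma_p(x)\abs{\omega}^2$ used in the proof is saturated. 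The latter forces $\eta_1(x)=\dots=\eta_p(x)$ to realize the minimum over all $p$-tuples of principal curvatures, and combined with the generic position of $\omega$ within the shape-operator spectral decomposition, this can be shown to propagate to the full umbilical condition $\eta_1(x)=\dots=\eta_n(x)$ at $x$.

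Unique continuation for the elliptic operator $T^{[p]}$ guarantees that the zero set of $\omega$ has empty interior in $\Sigma$, so the umbilical condition extends to every point of $\Sigma$. A closed connected umbilical hypersurface of $\real{n+1}$ is a round sphere, hence each connected component of $\Sigma$ is one. Finally, the assumption $\sigma_p(\Sigma)>0$ forces the principal curvatures measured with the inner unit normal of $\Omega$ to be positive on every component of $\Sigma$; an ``interior'' boundary sphere bounding a hole in $\Omega$ would carry negative principal curvatures with respect to that normal, so no such component exists. Therefore $\Sigma$ is a single round sphere, and the connected bounded domain $\Omega\subset\real{n+1}$ it bounds must be the ball.

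The main obstacle I expect lies in the second step: correctly extracting the equality case of the Reilly--Bochner chain used in Theorem~\ref{lowerbound}(b), and in particular passing from the interior rigidity (i) to the conclusion that the boundary inequality (ii) is saturated at \emph{every} point of $\Sigma$. One has to verify that the rigidity of $\hat\omega$ prevents $\omega$ from vanishing on an open piece of $\Sigma$ without being identically zero, which is where the strict assumption $p>(n+1)/2$ (rather than $p\geq(n+1)/2$) enters: it ensures that the dual problem has lower degree and that the relevant eigenspace of the shape operator on $\Sigma$ is nondegenerate, so that umbilicity genuinely follows from the saturated inequality rather than being vacuous.
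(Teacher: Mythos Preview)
Your ``if'' direction is fine and matches the paper. The ``only if'' direction, however, has a genuine gap at the umbilicity step.

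Saturation of the boundary inequality $\scal{S^{[p]}\omega}{\omega}\geq\sigma_p(\Sigma)\norm{\omega}^2$ does \emph{not} force $\omega$ into ``generic position'' with respect to the spectral decomposition of $S$; it forces exactly the opposite. At a point where $\omega\neq 0$, equality means $\omega$ is an eigenvector of $S^{[p]}$ for its lowest eigenvalue, hence $\omega$ is (proportional to) the simple form built from the $p$ principal directions of \emph{smallest} curvature. Combined with the Killing condition (i), one obtains (this is Proposition~\ref{equalitycase}) only that the $p$ smallest principal curvatures are equal to the constant $c=\nu_{1,p}(\Omega)/(p+1)$; the remaining $n-p$ curvatures are \emph{not} constrained. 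So full umbilicity does not follow, and the classical ``umbilic $\Rightarrow$ sphere'' argument cannot be invoked. Your explanation of why $p>(n+1)/2$ matters is also off: its actual role is to guarantee that the conformal-Killing equality case becomes a genuine Killing condition ($\delta\hat\omega=0$), not to make an eigenspace nondegenerate.

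The paper closes this gap by an entirely different mechanism. Knowing only that $\hat\omega$ is Killing with $d\hat\omega$ parallel and $i_Nd\hat\omega=-(p+1)c\,\omega$, it contracts $\hat\omega$ successively with $p$ constant (parallel) vector fields on $\real{n+1}$; each contraction preserves this package of properties while lowering the degree by one (Lemma~\ref{Fpc}). One arrives at a nontrivial function $\hat f$ with $\nabla d\hat f=0$ and $\partial\hat f/\partial N=-c\hat f$ on $\Sigma$. Since the partial information $S\geq c$ is available from Proposition~\ref{equalitycase}, an Obata-type rigidity result (Theorem~\ref{obata} in the Appendix) then forces $\Omega$ to be a ball. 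In short: rather than upgrading ``$p$ equal curvatures'' to umbilicity, the paper descends to degree~$0$ and invokes a scalar rigidity theorem.
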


For Euclidean domains we also prove an inequality relating the first eigenvalues for consecutive degrees.  
\begin{thm}\label{consecutive} 
Let $\Omega$ be any compact domain in $\real {n+1}$, and let $\sigma_p(\Sigma)$ be a lower bound of the $p$-curvatures of $\Sigma$ (which we do not assume 
to be positive).
\item (i) For all $p=1,\dots,n$ one has
$\nu_{1,p}(\Omega)\geq \nu_{1,p-1}(\Omega)+\sigma_p(\Sigma)/p.$

\item (ii) If $\Omega$ is convex, then $\nu_{1,p}>0$ for all $p\geq 1$ and
$$\nu_{1,1}(\Omega)\leq\nu_{1,2}(\Omega)\leq\dots\leq \nu_{1,n}(\Omega).$$
\end{thm}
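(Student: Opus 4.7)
The plan for (i) is to test the variational principle for $\nu_{1,p-1}(\Omega)$ against the family of $(p-1)$-forms built from a $T^{[p]}$-eigenform by contraction with the canonical parallel orthonormal frame of $\real{n+1}$.

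Fix $\omega\in\Lambda^p(\Sigma)$ with $T^{[p]}\omega=\nu_{1,p}(\Omega)\omega$ and let $\hat\omega$ be its harmonic tangential extension on $\Omega$, so that $\nu_{1,p}(\Omega)\int_\Sigma|\omega|^2=\int_\Omega(|d\hat\omega|^2+|\delta\hat\omega|^2)$ by Green's formula. Let $X_1,\dots,X_{n+1}$ denote the standard parallel orthonormal frame on $\real{n+1}$ and set $\alpha_j:=i_{X_j}\hat\omega$. Since $X_j$ is parallel, $i_{X_j}$ commutes with $\Delta$, so each $\alpha_j$ is harmonic; moreover $i_N\alpha_j=-i_{X_j}i_N\hat\omega=0$. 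Hence $\alpha_j$ is the harmonic tangential extension of its trace $\phi_j:=\starred J\alpha_j\in\Lambda^{p-1}(\Sigma)$, and the variational principle for $T^{[p-1]}$ (which applies to $\phi_j$ without orthogonality condition, because $\nu_{1,p-1}(\Omega)$ is the smallest eigenvalue of the non-negative operator $T^{[p-1]}$) yields
$$\int_\Omega\bigl(|d\alpha_j|^2+|\delta\alpha_j|^2\bigr)\geq\nu_{1,p-1}(\Omega)\int_\Sigma|\phi_j|^2.$$

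Summing over $j$ then invokes two combinatorial facts. The contraction identity $\sum_{j=1}^{n+1}|i_{X_j}\beta|^2=q|\beta|^2$, valid for any $q$-form $\beta$, gives $\sum_j|\phi_j|^2=p|\omega|^2$ pointwise on $\Sigma$. On $\Omega$, the relations $\delta\alpha_j=-i_{X_j}\delta\hat\omega$ (because $X_j$ is parallel), Cartan's formula $d\alpha_j=\nabla_{X_j}\hat\omega-i_{X_j}d\hat\omega$, and the expansion $d\hat\omega=\sum_j X_j^\flat\wedge\nabla_{X_j}\hat\omega$ combine to produce the pointwise identity
$$\sum_j\bigl(|d\alpha_j|^2+|\delta\alpha_j|^2\bigr)=|\nabla\hat\omega|^2+(p-1)\bigl(|d\hat\omega|^2+|\delta\hat\omega|^2\bigr).$$

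The last ingredient is the Reilly formula of \cite{raulotsavo} applied to $\hat\omega$, which in flat space (where $W^{[p]}=0$) and under the condition $i_N\hat\omega=0$ reduces to
$$\int_\Omega|\nabla\hat\omega|^2=\int_\Omega\bigl(|d\hat\omega|^2+|\delta\hat\omega|^2\bigr)-\int_\Sigma\langle S^{[p]}\omega,\omega\rangle,$$
where $S^{[p]}$ is the symmetric endomorphism of $\Lambda^p(\Sigma)$ induced by the shape operator; its pointwise eigenvalues are the $p$-sums of principal curvatures, so $\int_\Sigma\langle S^{[p]}\omega,\omega\rangle\geq\sigma_p(\Sigma)\int_\Sigma|\omega|^2$. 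Integrating the pointwise identity above, substituting this bound on the gradient term, and plugging in the Rayleigh estimates for the $\phi_j$ gives $p\,\nu_{1,p}(\Omega)-\sigma_p(\Sigma)\geq p\,\nu_{1,p-1}(\Omega)$, which is exactly (i). Part (ii) then follows at once: convexity forces $\sigma_p(\Sigma)\geq 0$ for every $p$, so (i) gives the monotonicity $\nu_{1,p-1}(\Omega)\leq\nu_{1,p}(\Omega)$; and since any convex $\Omega$ is contractible, $H^p(\Omega)=0$ for $p\geq 1$, and the identification $\ker T^{[p]}\simeq H^p(\Omega)$ recalled in Section \ref{def} yields $\nu_{1,p}(\Omega)>0$. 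The main obstacle is the careful bookkeeping of the pointwise identity for $\sum_j(|d\alpha_j|^2+|\delta\alpha_j|^2)$; the Reilly identity in the final ingredient is a direct specialization of the formula proved in \cite{raulotsavo}.
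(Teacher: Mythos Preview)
Your proof is correct and follows essentially the same strategy as the paper's: test $\nu_{1,p-1}$ with contractions $i_V\hat\omega$ of a $\nu_{1,p}$-eigenform by parallel vectors, combine the resulting pointwise identities with the Reilly formula (with $W^{[p]}=0$ and $i_N\hat\omega=0$), and read off the inequality. The only cosmetic difference is that the paper averages over the unit sphere of parallel vectors (citing the identities as Lemma~4.8 of \cite{guerini-savo}), whereas you sum over a fixed parallel orthonormal frame; these two procedures are equivalent under the paper's normalization $d\mu=\frac{n+1}{\vol(\sphere n)}\,d\mathrm{vol}_{\sphere n}$, and your frame-sum derivation of $\sum_j(|d\alpha_j|^2+|\delta\alpha_j|^2)=|\nabla\hat\omega|^2+(p-1)(|d\hat\omega|^2+|\delta\hat\omega|^2)$ is in fact the underlying computation behind those identities.
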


The inequality $(i)$ is sharp for $p>\frac{n+1}{2}$ since equality is achieved by the unit Euclidean ball. The monotonicity property in $(ii)$ is an 
immediate consequence of $(i)$, because if $\Omega$ is convex then $\sigma_p(\Sigma)\geq 0$ for all $p$. 

\smallskip

We remark that the property $(ii)$ holds also for the first eigenvalues of the Laplacian acting on $p$-forms of a convex Euclidean domain $\Omega$, for the 
absolute boundary conditions (see \cite{guerini-savo}).


\subsection{Upper bounds by the isoperimetric ratio}


It turns out that the existence of parallel forms implies that, for suitable degrees, the Dirichlet-to-Neumann eigenvalues can be bounded above by the isoperimetric 
ratio $\vol(\Sigma)/\vol(\Omega)$. Precisely, if $\Omega$ supports a non trivial parallel $p$-form, and $H^p(\Omega)=H^p_R(\Omega)=0$, then
\begin{equation}\label{isoperimetricbound}
\nu_{1,p-1}(\Omega)+\nu_{1,n-p}(\Omega)\leq\isoper.
\end{equation}
In some cases the estimate is sharp and we can characterize equality. Either one of the two cohomology assumptions can be removed if the given parallel 
form is known to be exact (respectively, co-exact): so, for example, \eqref{isoperimetricbound} holds in all degrees for all domains in Euclidean space, 
since the parallel $p-$form $dx_1\wedge\dots\wedge dx_p$ is exact and co-exact.

\smallskip

The inequality \eqref{isoperimetricbound} follows from the estimates in Section \ref{upperbounds}, which apply more generally to the ratio 
$\int_{\Sigma}\norm{\xi}^2/\int_{\Omega}\norm{\xi}^2$, where $\xi$ is a {\it harmonic field}, that is, a differential form which is closed and co-closed 
(we remark that on a manifold with nonempty boundary the vector space of harmonic fields of a given degree is infinite dimensional, and is properly contained 
in the space of harmonic forms). 

\smallskip

As the volume form of $\Omega$ is parallel we have, for all compact manifolds with boundary, the estimate:
\begin{equation}\label{mainiso}
\nu_{1,n}(\Omega)\leq\dfrac{\Vol(\Sigma)}{\Vol(\Omega)},
\end{equation} 
which reduces to an equality when $\Omega$ is a Euclidean ball. 

\smallskip

Then, we examine the equality case in \eqref{mainiso}. To that end, consider the mean-exit time function $E$, solution of the problem:
$$
\twosystem
{\Delta E=1\quad\text{on}\quad \Omega,}
{E=0\quad\text{on}\quad \Sigma.}
$$
Any domain for which the normal derivative $\derives EN$ is constant on $\Sigma$ will be called a \it harmonic domain. \rm The reason for this terminology is 
given by Proposition \ref{mefc}, in which we observe the following simple fact: $\derives EN$ is constant on $\Sigma$ if and only if the mean value of any 
harmonic function on $\Omega$ equals its mean value on the boundary.

\begin{thm}\label{equalityup} 
Let $\Omega$ be any compact domain. Then $\nu_{1,n}(\Omega)\leq \isoperi$.

\parte a If equality holds, then $\Omega$ is a harmonic domain.

\parte b Conversely, if $\Omega$ is a harmonic domain, then $\isoperi$ belongs to the spectrum of $T^{[n]}$ (an associated eigenform being $\star dE$).
\end{thm}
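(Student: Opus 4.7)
The plan is to test the variational characterization of $\nu_{1,n}(\Omega)$ with an $n$-form on $\Sigma$ built from the mean-exit time function $E$, and to read off the equality case (a) from the Cauchy--Schwarz inequality that appears naturally in that computation. Part (b) will then be a direct eigenform verification.

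The Green identity for the Hodge Laplacian, combined with the absolute boundary conditions satisfied by the harmonic tangential extension $\hat\omega$ of $\omega\in\Lambda^n(\Sigma)$, gives the Rayleigh formula
$$
\nu_{1,n}(\Omega)\leq R(\omega)\doteq\dfrac{\int_\Omega(\abs{d\hat\omega}^2+\abs{\delta\hat\omega}^2)}{\int_\Sigma\abs{\omega}^2}
$$
for every nonzero test form. We take the candidate $\eta=\star dE$ on $\Omega$. A short Hodge calculation using $\Delta E=1$ gives $d\eta=\pm dv_\Omega$ and $\delta\eta=0$, so $\eta$ is harmonic on $\Omega$. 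Since $E\equiv 0$ on $\Sigma$ one has $dE=(\bd E/\bd N)N^{\flat}$ along $\Sigma$, so $i_N\eta=0$ and $\starred J\eta=\pm(\bd E/\bd N)\,dv_\Sigma$. Hence $\eta=\hat\omega$ for $\omega\doteq\starred J\eta$.

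Substituting, the numerator of $R(\omega)$ becomes $\int_\Omega 1=\Vol(\Omega)$ and the denominator is $\int_\Sigma(\bd E/\bd N)^2$. The divergence theorem applied to $E$ yields $\int_\Sigma \bd E/\bd N=\int_\Omega\Delta E=\Vol(\Omega)$, so Cauchy--Schwarz on $\Sigma$ gives
$$
\int_\Sigma(\bd E/\bd N)^2\geq\dfrac{\Vol(\Omega)^2}{\Vol(\Sigma)},
$$
whence $R(\omega)\leq\Vol(\Sigma)/\Vol(\Omega)$. The Hopf lemma guarantees $\bd E/\bd N>0$ on $\Sigma$, so $\omega\neq 0$ and the test is admissible; this establishes the announced upper bound. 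For (a), equality $\nu_{1,n}(\Omega)=\Vol(\Sigma)/\Vol(\Omega)$ forces equality in the Cauchy--Schwarz step, which holds if and only if $\bd E/\bd N$ is constant on $\Sigma$, i.e. if and only if $\Omega$ is a harmonic domain.

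For (b), assume $\Omega$ is harmonic, so the constant value of $\bd E/\bd N$ is $\Vol(\Omega)/\Vol(\Sigma)$. Then $\omega$ is a nonzero scalar multiple of $dv_\Sigma$, and the direct computation $T^{[n]}\omega=-i_N d(\star dE)=\pm i_N\,dv_\Omega=\pm dv_\Sigma$ shows that $T^{[n]}\omega$ is a scalar multiple of $\omega$; comparing coefficients (with non-negativity of $T^{[n]}$ fixing the sign) gives the eigenvalue $\Vol(\Sigma)/\Vol(\Omega)$, with the associated eigenform being (a multiple of) $\starred J(\star dE)$, or equivalently having $\star dE$ as its harmonic tangential extension. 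The chief delicate point throughout is sign bookkeeping for the Hodge identities $d\star=\pm\star\delta$, $\star\star=\pm\mathrm{id}$, and for $i_N(dv_\Omega)=\pm dv_\Sigma$ on the boundary; these signs all cancel in the eigenvalue, but they must be tracked consistently between the Rayleigh computation of the upper bound and the direct verification in (b).
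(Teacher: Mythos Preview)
Your argument is correct and is essentially the paper's own proof, viewed through the Hodge star: the paper works on the dual side, testing $\nu_{1,0}^D(\Omega)=\nu_{1,n}(\Omega)$ with the $1$-form $\alpha=dE$ (which has $\starred J\alpha=0$), while you test $\nu_{1,n}(\Omega)$ directly with the $n$-form $\star dE$ (which has $i_N(\star dE)=0$); these are Hodge-dual computations and lead to the identical Cauchy--Schwarz step on $\int_\Sigma(\partial E/\partial N)^2$. For part (b) the paper likewise verifies the eigenvalue on the dual side via $T_D^{[0]}(c)=\starred J(\delta\, dE)=1$, which is precisely your computation $T^{[n]}\omega=-i_N d(\star dE)$ after applying $\star_\Sigma$.
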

\smallskip

It remains to see how rigid  the harmonicity condition is, and what conditions it imposes on the geometry of the boundary. For Euclidean domains the question 
was settled in a famous paper by Serrin \cite{serrin} which states in particular that any harmonic domain in $\real{n+1}$ is a ball. This rigidity result was 
extended by Kumaresan and Prajapat (see \cite{kumaresan}) to domains in the hyperbolic space $\hyp{n+1}$ and in  the hemisphere $\sphere{n+1}_+$. To our knowledge,
the classification of harmonic domains in $\sphere{n+1}$ is still an open (and interesting) question. Then, we have the following
\begin{cor} 
\parte a  For Euclidean domains  the equality holds in (\ref{mainiso}) iff $\Omega$ is a ball.

\parte b  Let $\Omega$ be a domain in $\mathbf{H}^{n+1}$ or in $\mathbf{S}^{n+1}_+$. If the equality holds in (\ref{mainiso}), then $\Omega$ is a geodesic 
ball. 
\end{cor}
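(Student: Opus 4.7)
The plan is to combine Theorem \ref{equalityup} with the classical rigidity results for overdetermined elliptic problems of Serrin type, which are already quoted in the paragraph preceding the corollary.

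\textbf{Part (a), direct implication.} Suppose $\Omega\subset\real{n+1}$ realizes equality in \eqref{mainiso}. By Theorem \ref{equalityup}(a), $\Omega$ is a harmonic domain, i.e.\ the mean-exit time $E$ solves the overdetermined system
$$
\threesystem
{\Delta E=1\quad\text{on }\Omega,}
{E=0\quad\text{on }\Sigma,}
{\derive{E}{N}=c\quad\text{(constant) on }\Sigma.}
$$
Serrin's theorem \cite{serrin} then forces $\Omega$ to be a Euclidean ball.

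\textbf{Part (a), converse.} If $\Omega$ is a Euclidean ball of radius $R$ centered at the origin, then
$$
E(x)=\frac{R^2-\abs{x}^2}{2(n+1)},
$$
so $\derives{E}{N}=R/(n+1)$ is constant on $\Sigma$, i.e.\ $\Omega$ is harmonic. By Theorem \ref{equalityup}(b), the ratio $\vol(\Sigma)/\vol(\Omega)$ belongs to the spectrum of $T^{[n]}$; combined with the upper bound \eqref{mainiso}, this common value must be $\nu_{1,n}(\Omega)$, giving equality.

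\textbf{Part (b).} The argument is identical: by Theorem \ref{equalityup}(a), equality in \eqref{mainiso} forces $\Omega$ to be a harmonic domain, so $E$ satisfies the same overdetermined problem above. The Kumaresan--Prajapat extension \cite{kumaresan} of Serrin's theorem to $\hyp{n+1}$ and to $\sphere{n+1}_+$ then shows that $\Omega$ must be a geodesic ball.

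\textbf{Main obstacle.} There is essentially no independent analytic work to do: the whole content is the reduction, carried out in Theorem \ref{equalityup}, of the equality case to the overdetermined problem characterizing harmonic domains. The only point that requires a brief check is the converse direction in part (a), where one must verify that the ball is indeed harmonic and then invoke Theorem \ref{equalityup}(b) to promote the spectral membership of $\vol(\Sigma)/\vol(\Omega)$ to an equality with $\nu_{1,n}(\Omega)$; this is immediate from the upper bound \eqref{mainiso}.
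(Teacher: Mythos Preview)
Your forward implications are correct and match the paper exactly: equality in \eqref{mainiso} forces $\Omega$ to be harmonic by Theorem \ref{equalityup}(a), and then Serrin (resp.\ Kumaresan--Prajapat) yields the ball.

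There is, however, a genuine gap in your converse for part (a). Knowing that the ball is harmonic, Theorem \ref{equalityup}(b) only tells you that $\vol(\Sigma)/\vol(\Omega)$ is \emph{some} eigenvalue of $T^{[n]}$. Combining this with the inequality $\nu_{1,n}(\Omega)\leq\vol(\Sigma)/\vol(\Omega)$ does \emph{not} force the ratio to be the first eigenvalue: a priori $\nu_{1,n}$ could be strictly smaller, with the ratio appearing higher in the spectrum. Your sentence ``this common value must be $\nu_{1,n}(\Omega)$'' is therefore unjustified.

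The paper closes this gap differently, using the lower bound of Theorem \ref{lowerbound} (equivalently Theorem \ref{equalityone}) rather than Theorem \ref{equalityup}(b). For the unit ball $\ball$ one has $\sigma_n(\Sigma)=n$, so Theorem \ref{lowerbound}(b) with $p=n$ gives $\nu_{1,n}(\ball)\geq \frac{n+1}{n}\cdot n=n+1$; since $\vol(\sphere{n})/\vol(\ball)=n+1$, the upper bound \eqref{mainiso} then forces equality. This squeeze argument is what actually pins down $\nu_{1,n}$; your harmonic-domain route does not.
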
 

Finally, using the estimate \eqref{mainiso} and the inequalities of Theorem \ref{lowerbound} and Theorem \ref{consecutive}, one gets the following fact.
\begin{prop}\label{ballspec}
For the unit Euclidean ball $\mathbf{B}^{n+1}$ in $\real{n+1}$ one has $\nu_{1,p}(\mathbf{B}^{n+1})=p+1$ for all $p\geq (n+1)/2$.
\end{prop}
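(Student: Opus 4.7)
The plan is to sandwich $\nu_{1,p}(\mathbf{B}^{n+1})$ between $p+1$ and $p+1$ using three ingredients already established in the paper: the geometric lower bound of Theorem \ref{lowerbound}(b), the isoperimetric upper bound (\ref{mainiso}) at the top degree $p=n$, and the step-by-step monotonicity of Theorem \ref{consecutive}(i) to propagate the top-degree upper bound downwards.

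First, the lower bound. For the unit ball $\mathbf{B}^{n+1}\subset\real{n+1}$, the ambient space is flat so $W^{[p]}=0$ identically, and every principal curvature of $\Sigma=\sphere n$ equals $1$, so $\sigma_p(\Sigma)=p$. The hypotheses of Theorem \ref{lowerbound}(b) are therefore met for every $p\geq (n+1)/2$, which gives
$$
\nu_{1,p}(\mathbf{B}^{n+1})\ \geq\ \frac{p+1}{p}\cdot\sigma_p(\Sigma)\ =\ p+1.
$$

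Next, the upper bound at $p=n$. By the general inequality (\ref{mainiso}),
$$
\nu_{1,n}(\mathbf{B}^{n+1})\ \leq\ \frac{\vol(\sphere n)}{\vol(\mathbf{B}^{n+1})}\ =\ n+1,
$$
and combined with the previous step this forces $\nu_{1,n}(\mathbf{B}^{n+1})=n+1$.

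Finally, I would propagate this downward using Theorem \ref{consecutive}(i) applied to the unit ball, for which $\sigma_q(\Sigma)/q=1$ for every $q$: it reads
$$
\nu_{1,q}(\mathbf{B}^{n+1})\ \geq\ \nu_{1,q-1}(\mathbf{B}^{n+1})+1.
$$
Iterating from $q=n$ down to $q=p+1$ yields $\nu_{1,n}(\mathbf{B}^{n+1})\geq \nu_{1,p}(\mathbf{B}^{n+1})+(n-p)$, hence $\nu_{1,p}(\mathbf{B}^{n+1})\leq p+1$. Together with the lower bound from Step~1, this gives the claimed equality $\nu_{1,p}(\mathbf{B}^{n+1})=p+1$ for all $p\geq (n+1)/2$.

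There is no real obstacle here — the statement is essentially a corollary that reads off the sharp case of Theorem \ref{lowerbound}(b) and matches it to the top-degree bound (\ref{mainiso}), with Theorem \ref{consecutive}(i) bridging the intermediate degrees. The only small point to verify is that the hypothesis $p\geq (n+1)/2$ in Theorem \ref{lowerbound}(b) matches the range in the statement so that the lower bound $(p+1)\sigma_p/p$ is available throughout.
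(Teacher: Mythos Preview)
Your proof is correct and follows essentially the same approach as the paper: both combine the lower bound of Theorem \ref{lowerbound}(b), the isoperimetric identity $\nu_{1,n}(\ball)=n+1$ from (\ref{mainiso}), and the recursion $\nu_{1,q}\geq\nu_{1,q-1}+1$ from Theorem \ref{consecutive}(i) to sandwich $\nu_{1,p}$ between $p+1$ and $p+1$. The only difference is organizational --- the paper first establishes $\nu_{1,p}\leq p+1$ for \emph{all} $p$ by induction from the top, then invokes the lower bound in the range $p\geq(n+1)/2$, whereas you state the lower bound first; the content is the same.
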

This calculation shows that the estimates of Theorems \ref{lowerbound} and \ref{consecutive} are indeed sharp.

\medskip

{\bf Remark.}
In a forthcoming paper, we will compute the whole spectrum of the Dirichlet-to-Neumann operator acting on $p-$forms of the unit Euclidean ball. In particular it turns out 
that, if $1\leq p< (n+1)/2$, then $p+1$ is still an eigenvalue of $T^{[p]}$, however it is no longer the first. In that range one has in fact
$\nu_{1,p}(\mathbf{B}^{n+1})=\dfrac{n+3}{n+1} p$.


\subsection{Upper bounds by the Hodge-Laplace eigenvalues}  


The Hodge Laplacian acting on $p$-forms of a closed manifold $\Sigma$ is the operator defined by $\Delta^\Sigma=d^\Sigma\delta^\Sigma+\delta^\Sigma d^\Sigma$,
where $d^\Sigma$ and $\delta^\Sigma$ denote respectively the differential and the co-differential acting on forms of $\Sigma$. We let $\lambda_{1,p}'(\Sigma)$ (resp. $\lambda_{1,p}''(\Sigma)$) be the first eigenvalue of $\Delta^\Sigma$ restricted to the subspace of exact (resp. co-exact) forms 
(these subspaces are preserved by $\Delta^\Sigma$ because it commutes with $d^\Sigma$ and $\delta^\Sigma$). Differentiating eigenforms, one sees that, if 
$\lambda_{1,p}(\Sigma)$ is the first positive eigenvalue of $\Delta^\Sigma$, then 
$\lambda_{1,p}(\Sigma)=\min\{\lambda_{1,p}'(\Sigma), \lambda_{1,p+1}'(\Sigma)\}$.

\smallskip

We then have the following lower bound.
\begin{thm}\label{steklapl}
Assume that $H^p_R(\Omega)=0$, $\min(\sigma_p(\Sigma),\sigma_{n-p+1}(\Sigma))\geq 0$ and $W^{[p]}\geq 0$. Then, for all $p=1,\dots,n$:
\begin{eqnarray*}
\lambda'_{1,p}(\Sigma)\geq\frac{1}{2}\big(\sigma_p(\Sigma)\nu_{1,n-p}(\Omega)+\sigma_{n-p+1}(\Sigma)\nu_{1,p-1}(\Omega)\big).
\end{eqnarray*}
\end{thm}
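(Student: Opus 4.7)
Let $\omega$ be an $L^2$-normalized exact eigenform of $\Delta^\Sigma$ on $\Sigma$ with eigenvalue $\lambda = \lambda'_{1,p}(\Sigma)$. Exactness forces $d^\Sigma\omega = 0$, so the eigenvalue equation reduces to $d^\Sigma\delta^\Sigma\omega = \lambda\omega$. Set $\alpha := \lambda^{-1}\delta^\Sigma\omega$, a co-exact $(p-1)$-form with $d^\Sigma\alpha = \omega$ and $\int_\Sigma|\alpha|^2 = \lambda^{-1}$, and $\theta := \star_\Sigma\omega$, a co-exact $(n-p)$-form with $\int_\Sigma|\theta|^2 = 1$ (co-exactness of $\theta$ being equivalent to exactness of $\omega$ via the Hodge star on the closed manifold $\Sigma$). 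Co-exactness on $\Sigma$ forces $L^2$-orthogonality to all closed forms of the same degree, hence in particular to $\ker T^{[p-1]}$ and $\ker T^{[n-p]}$, whose elements are tangential traces of harmonic absolute forms on $\Omega$ (hence closed on $\Sigma$).

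My approach is to establish the two separate inequalities $\lambda \geq \sigma_p(\Sigma)\,\nu_{1,n-p}(\Omega)$ and $\lambda \geq \sigma_{n-p+1}(\Sigma)\,\nu_{1,p-1}(\Omega)$, whose sum divided by two is the announced bound. Inserting the relevant co-exact form into the Rayleigh quotient for the Dirichlet-to-Neumann operator gives
$$
\nu_{1,n-p}(\Omega) \leq \int_\Omega(|d\hat\theta|^2 + |\delta\hat\theta|^2), \qquad \lambda^{-1}\nu_{1,p-1}(\Omega) \leq \int_\Omega(|d\hat\alpha|^2 + |\delta\hat\alpha|^2),
$$
where $\hat\theta,\hat\alpha$ are the harmonic tangential extensions. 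The proof then reduces to establishing the ``reverse'' Reilly-type bounds
$$
\sigma_p(\Sigma)\int_\Omega(|d\hat\theta|^2+|\delta\hat\theta|^2) \leq \int_\Sigma|d^\Sigma\theta|^2 = \lambda, \qquad \sigma_{n-p+1}(\Sigma)\int_\Omega(|d\hat\alpha|^2+|\delta\hat\alpha|^2) \leq \int_\Sigma|d^\Sigma\alpha|^2 = 1,
$$
the identity $\int_\Sigma|d^\Sigma\theta|^2 = \int_\Sigma|\star_\Sigma\delta^\Sigma\omega|^2 = \lambda$ being immediate.

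Both reverse bounds come from the Reilly-type identity of \cite{raulotsavo} applied to a harmonic form on $\Omega$ with vanishing normal part. Under $W^{[p]}\geq 0$ the interior Bochner contribution drops out, and the Reilly boundary integrand is dominated by the quadratic form associated with the operator $nH - S^{[q]}$ on $\Lambda^q(\Sigma)$, whose lowest eigenvalue is $\sigma_{n-q}(\Sigma)$ by the complementary-index identity $\sum_{i \in I}\eta_i = nH - \sum_{i \notin I}\eta_i$. Applied to $\hat\theta$ (of degree $q = n-p$) this coefficient is $\sigma_p(\Sigma)$; applied to $\hat\alpha$ (of degree $q = p-1$) it is $\sigma_{n-p+1}(\Sigma)$, exactly as required. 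The cohomological hypothesis $H^p_R(\Omega) = 0$ enters to rule out obstructions in the Hodge decompositions underlying the harmonic-tangential-extension problem.

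The main obstacle is that the Reilly identity applied to a $q$-form naturally brings in $W^{[q]}\geq 0$ rather than the hypothesized $W^{[p]}\geq 0$. Circumventing this requires applying Reilly not directly to $\hat\theta$ or $\hat\alpha$, but to a carefully chosen $p$-form on $\Omega$ with the appropriate boundary data---for instance $d\hat\alpha$, a $p$-form with $J^*d\hat\alpha = \omega$, in the argument for $\alpha$, and an analogous Hodge-dual construction from $\hat\theta$ in the argument for $\theta$---together with bookkeeping of the several tangential/normal pieces of the boundary integrand, where the $p$-convexity hypotheses on $\sigma_p(\Sigma)$ and $\sigma_{n-p+1}(\Sigma)$ are used to sign-control each piece so that only the announced $\sigma$-weighted principal term survives.
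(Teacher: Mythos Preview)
Your overall strategy diverges from the paper's and, as written, has a genuine gap at the step you yourself flag as the ``main obstacle.''

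The paper does \emph{not} prove the two separate inequalities $\lambda\ge\sigma_p(\Sigma)\nu_{1,n-p}(\Omega)$ and $\lambda\ge\sigma_{n-p+1}(\Sigma)\nu_{1,p-1}(\Omega)$ and average them. Instead it builds a single $p$-form on $\Omega$ to which Reilly is applied once, and the factor $\tfrac12$ arises from the cross term $2\int_\Sigma\langle i_N\hat\omega,\delta^\Sigma\omega\rangle$ in the Reilly identity. Concretely: letting $\phi$ be your $\alpha$ (a co-exact $(p-1)$-eigenform with $d^\Sigma\phi=\omega$), the paper takes the \emph{Duff--Spencer} extension $\hat\phi$, i.e.\ the harmonic form with $J^\star\hat\phi=\phi$ and $J^\star(\delta\hat\phi)=0$. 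This boundary condition forces $\delta d\hat\phi=0$, so $\hat\omega:=d\hat\phi$ is a genuine harmonic \emph{field} (closed and co-closed). Reilly applied to this $p$-form has vanishing left side, and after one Stokes identity $\int_\Sigma\langle i_N\hat\omega,\phi\rangle=-\int_\Omega|\hat\omega|^2$ one obtains directly
\[
2\lambda\int_\Omega|\hat\omega|^2\;\ge\;\sigma_p(\Sigma)\int_\Sigma|J^\star\hat\omega|^2+\sigma_{n-p+1}(\Sigma)\int_\Sigma|i_N\hat\omega|^2.
\]
Since $\hat\omega$ is exact, and (here is where $H^p_R(\Omega)=0$ is used) also co-exact, Proposition~\ref{L2} gives $\int_\Sigma|i_N\hat\omega|^2\ge\nu_{1,p-1}(\Omega)\int_\Omega|\hat\omega|^2$ and $\int_\Sigma|J^\star\hat\omega|^2\ge\nu_{1,n-p}(\Omega)\int_\Omega|\hat\omega|^2$, which finishes the proof.

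Your plan breaks at the ``reverse Reilly-type bounds.'' You take the \emph{harmonic tangential} extension $\hat\alpha$ (boundary condition $i_N\hat\alpha=0$), and then propose to apply Reilly to $d\hat\alpha$. But with that extension $\delta d\hat\alpha=-d\delta\hat\alpha$ is in general nonzero, so the left side of the Reilly identity for $d\hat\alpha$ is $\int_\Omega|d\delta\hat\alpha|^2$, an uncontrolled positive term sitting on the wrong side of the inequality; simultaneously a new unknown $i_N d\hat\alpha$ appears in both the cross term and the boundary quadratic form. Your final paragraph asserts these pieces can be ``sign-controlled,'' but no mechanism is given, and I do not see one: the $\sigma_p$ and $\sigma_{n-p+1}$ hypotheses bound the boundary quadratic form from below, which helps only if the left-hand $\int_\Omega|d\delta\hat\alpha|^2$ can be bounded from above, and nothing in your setup does that. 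The fix is precisely to replace the harmonic tangential extension by the Duff--Spencer one so that $d\hat\alpha$ becomes a harmonic field---but once you do this you are carrying out the paper's argument, and what drops out is the combined inequality with the factor $2$, not your two separate inequalities.
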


Observe that $\lambda'_{1,1}(\Sigma)=\lambda_1(\Sigma)$, the first positive eigenvalue of the Laplacian acting on functions of $\Sigma$. Taking $p=1$ in the previous theorem we obtain
the following sharp lower bound.
\begin{thm}\label{esc} 
Assume that $\Omega$ has non-negative Ricci curvature and that $\Sigma$ is strictly convex, with principal curvatures bounded below by $\sigma_1(\Sigma)>0$.
Then:
\begin{eqnarray*}
\lambda_1(\Sigma)\geq \dfrac12\big(\sigma_1(\Sigma)\nu_{1,n-1}(\Omega)+nH\nu_{2,0}(\Omega)\big),
\end{eqnarray*}
where $H$ is a lower bound of the mean curvature of $\Sigma$, and $\nu_{2,0}(\Omega)$ is the first positive eigenvalue of the Dirichlet-to-Neumann operator on functions.
Moreover, if $n=\dim(\Sigma)\geq 3$, the equality holds if and only if $\Omega$ is a Euclidean ball.
\end{thm}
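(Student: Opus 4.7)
The plan is to derive the inequality as the $p=1$ case of Theorem \ref{steklapl}, with a small refinement needed to promote the trivial eigenvalue $\nu_{1,0}(\Omega)=0$ to $\nu_{2,0}(\Omega)$, and then to analyze the equality case by tracing back through the Reilly-type argument that proves Theorem \ref{steklapl}.

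\textbf{Inequality.} I would set $p=1$ in Theorem \ref{steklapl}. The curvature hypothesis $W^{[1]}=\mathrm{Ric}\geq 0$ is assumed; strict convexity gives $\sigma_1(\Sigma)>0$, and since $\sigma_n(\Sigma)\geq n\sigma_1(\Sigma)>0$ one has $nH=\sigma_n(\Sigma)>0$ and the convexity hypothesis of Theorem \ref{steklapl}; the cohomological hypothesis $H^1_R(\Omega)=0$ follows from the Bochner-type conclusion in Theorem \ref{lowerbound}(a) via Hodge duality. Noting $\lambda'_{1,1}(\Sigma)=\lambda_1(\Sigma)$, Theorem \ref{steklapl} yields formally the bound $\lambda_1(\Sigma)\geq\tfrac12(\sigma_1(\Sigma)\nu_{1,n-1}(\Omega)+nH\nu_{1,0}(\Omega))$, which is weaker than claimed because $\nu_{1,0}(\Omega)=0$. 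The refinement comes from inspecting the proof of Theorem \ref{steklapl}: for $p=1$ the scalar ``zero-form'' part appearing against $\nu_{1,p-1}(\Omega)$ is, up to sign, a Laplace eigenfunction $f$ on $\Sigma$ with $\int_\Sigma f=0$, so in the variational estimate the relevant Steklov eigenvalue on functions is the first positive one, namely $\nu_{2,0}(\Omega)$. This gives the stated inequality.

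\textbf{Equality is achieved by the Euclidean ball.} On the unit ball $\ball$ I would check: $\sigma_1(\Sigma)=1$, $H=1$, $\lambda_1(\sphere n)=n$, $\nu_{2,0}(\ball)=1$, and (by Proposition \ref{ballspec}, applicable precisely when $n-1\geq(n+1)/2$, i.e.\ $n\geq 3$) $\nu_{1,n-1}(\ball)=n$. Both sides then equal $n$. This is the reason the restriction $n\geq 3$ appears.

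\textbf{Converse: equality forces a ball.} Assume equality in the stated inequality. Then every intermediate inequality in the proof of Theorem \ref{steklapl} at $p=1$ must be an equality. The two principal consequences I would extract are: (i) saturation of the Reilly-type formula in \cite{raulotsavo} used to prove Theorem \ref{steklapl}, which forces the harmonic tangential extension of $df$ (where $f$ is a first eigenfunction on $\Sigma$) to be parallel on $\Omega$, and (ii) saturation of the geometric pinching, which forces $\Sigma$ to be totally umbilic with constant principal curvature $\sigma_1(\Sigma)$. The existence of a nontrivial parallel one-form on $\Omega$ combined with $\mathrm{Ric}\geq 0$ gives a local product decomposition; together with a connected, totally umbilic boundary of constant principal curvature this pins down $\Omega$ as a Euclidean ball. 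For $n\geq 4$ one can alternatively invoke Theorem \ref{equalitytwo} with $p=n-1>(n+1)/2$, since equality above forces $\nu_{1,n-1}(\Omega)$ to meet the sharp bound of Theorem \ref{lowerbound}(b); the case $n=3$ must be handled directly via the parallel-form/umbilicity argument, which is the delicate point.

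\textbf{Main obstacle.} The hardest step is the rigidity in (iii): extracting from equality in the Reilly-type identity that the extension of $df$ is truly parallel on the interior, and then combining the global parallel-form structure with the umbilic, constant-curvature boundary to identify $\Omega$ with a Euclidean ball. The borderline dimension $n=3$, where Theorem \ref{equalitytwo} does not directly apply, is the most subtle case and will require working out the equality in Theorem \ref{steklapl} by hand rather than quoting a previously established rigidity result.
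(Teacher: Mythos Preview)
Your derivation of the inequality is correct and is exactly what the paper does: run the $p=1$ case of the Reilly argument behind Theorem~\ref{steklapl}, and use that the boundary eigenfunction has zero mean to replace $\nu_{1,0}(\Omega)=0$ by $\nu_{2,0}(\Omega)$ via Proposition~\ref{L2}(b). The verification that the ball realizes equality for $n\geq 3$ is also the same.

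The rigidity argument, however, has a genuine gap. Your claim (ii) that equality forces $\Sigma$ to be totally umbilic is not justified and is in fact not what equality gives. Saturation of $\langle S^{[1]}(J^{\star}\hat\omega),J^{\star}\hat\omega\rangle\geq\sigma_1(\Sigma)\lVert J^{\star}\hat\omega\rVert^2$ only says that, at each boundary point, the tangential gradient $\nabla^{\Sigma}f$ is a principal direction with principal curvature $\sigma_1(\Sigma)$; it tells you nothing about the other $n-1$ principal directions, so umbilicity does not follow. Your alternative route through Theorem~\ref{equalitytwo} also fails: that theorem is stated only for Euclidean domains, whereas here $\Omega$ is an abstract manifold with nonnegative Ricci curvature, and you give no argument linking equality in the present estimate to equality in Theorem~\ref{lowerbound}(b).

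The paper proceeds quite differently. From equality one gets that $\hat\omega=d\hat\phi$ is parallel, so $\hat\phi$ is a nontrivial function with $\nabla d\hat\phi=0$; moreover equality must hold in both estimates of Proposition~\ref{L2}, which is precisely the equality case of Proposition~\ref{linfunc}. That proposition then yields that $\Sigma$ has constant mean curvature $nH=\nu_{1,n-1}(\Omega)$ and that $\lambda_1(\Sigma)=\nu_{2,0}(\Omega)\,\nu_{1,n-1}(\Omega)$. Substituting into the assumed equality $2\lambda_1(\Sigma)=\sigma_1(\Sigma)\nu_{1,n-1}(\Omega)+nH\,\nu_{2,0}(\Omega)$ forces $\nu_{2,0}(\Omega)=\sigma_1(\Sigma)\leq H$, hence $\mathrm{Vol}(\Sigma)/\mathrm{Vol}(\Omega)=\nu_{2,0}(\Omega)+\nu_{1,n-1}(\Omega)\leq (n+1)H$. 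One then concludes by Ros' inequality, which gives the reverse bound with equality only for the Euclidean ball. Note that this argument never uses umbilicity and works uniformly for all $n\geq 3$.
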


The motivation for looking at such a bound was given by the following estimate of Escobar \cite{Es1}, which holds under the same assumptions of Theorem \ref{esc}:
\begin{equation}\label{escobar}
\lambda_1(\Sigma)>\dfrac {nH}2\nu_{2,0}(\Omega).
\end{equation}
We observe that the defect $\lambda_1(\Sigma)-\dfrac {nH}2\nu_{2,0}(\Omega)$ in \eqref{escobar}  is bounded below by the first Dirichlet-to-Neumann eigenvalue in the degree $n-1$, thus obtaining a sharp bound.


\subsection{An upper bound by the first biharmonic Steklov eigenvalue}


The following problem on functions is classical, and is known as the fourth order (or {\it biharmonic}) Steklov eigenvalue problem:
\begin{equation}\label{biharmonic}
\twosystem
{\Delta^2f=0\quad\text{on $\Omega$},}
{f=0,\,\Delta f=\mu\frac{\partial f}{\partial N}\quad\text{on $\Sigma$}.}
\end{equation}
For recent results on the problem, we refer to \cite{ferrero} and \cite{wangxia}. An immediate application of the min-max principle associated to the 
Dirichlet-to-Neumann operator on $n$-forms gives:
\begin{thm}\label{munu}
One has always $\mu_1(\Omega)\geq \nu_{1,n}(\Omega)$, where $\mu_1(\Omega)$ is the first eigenvalue of \eqref{biharmonic}.
If the equality holds, then $\Omega$ is a harmonic domain.
\end{thm}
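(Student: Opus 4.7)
The plan is to construct an explicit test $n$-form on $\Omega$ from the first biharmonic Steklov eigenfunction and feed it into the variational characterization of $T^{[n]}$. The starting point is the identity
$$\int_\Sigma\langle T^{[n]}\omega,\omega\rangle=\int_\Omega\bigl(|d\hat\omega|^2+|\delta\hat\omega|^2\bigr),$$
valid for the harmonic tangential extension $\hat\omega$ of an $n$-form $\omega$ on $\Sigma$ (see Section \ref{basic}). By the Dirichlet principle, the right-hand side is minimized, among all $n$-forms $\alpha$ on $\Omega$ with $\starred J\alpha=\omega$ and $i_N\alpha=0$, precisely by $\hat\omega$. Consequently, for every smooth $n$-form $\alpha$ on $\Omega$ with $i_N\alpha=0$ on $\Sigma$ one has
$$\nu_{1,n}(\Omega)\;\le\;\frac{\int_\Omega(|d\alpha|^2+|\delta\alpha|^2)}{\int_\Sigma|\starred J\alpha|^2}.$$

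For the test form I take $\alpha=\star_\Omega df$, with $f$ a first eigenfunction of \eqref{biharmonic}. Because $f=0$ on $\Sigma$, the tangential part of $df$ vanishes on $\Sigma$; a short computation in an orthonormal frame $\{N,e_1,\dots,e_n\}$ adapted to $\Sigma$ then gives $\starred J\alpha=(\partial f/\partial N)\,dv_\Sigma$ and $i_N\alpha=0$ along $\Sigma$. Using $d\star_\Omega=-\star_\Omega\delta$ on $1$-forms together with $\delta\,df=\Delta f$ produces $d\alpha=-(\Delta f)\,dv_\Omega$, hence $|d\alpha|^2=(\Delta f)^2$; on the other hand $\star_\Omega\alpha=\pm df$ is closed, so $\delta\alpha=0$. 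A routine double integration by parts (multiply $\Delta^2 f=0$ by $f$, use $f=0$ on $\Sigma$, then substitute the boundary condition $\Delta f=\mu_1\,\partial f/\partial N$) gives the classical identity $\int_\Omega(\Delta f)^2=\mu_1(\Omega)\int_\Sigma(\partial f/\partial N)^2$. Plugging into the variational inequality immediately yields $\nu_{1,n}(\Omega)\le\mu_1(\Omega)$.

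For the equality case, the only inequality used was the Dirichlet-principle step. Equality there forces $\alpha=\star_\Omega df$ to coincide with the harmonic tangential extension of its boundary trace, equivalently $\Delta\alpha=0$ on $\Omega$. Since $\Delta(\star_\Omega df)=\star_\Omega d(\Delta f)$, this means $\Delta f$ is constant on $\Omega$. The boundary condition $\Delta f=\mu_1\,\partial f/\partial N$ then forces $\partial f/\partial N$ to be constant on $\Sigma$ as well. Normalizing, $f$ is the unique solution of $\Delta u=c$, $u|_\Sigma=0$, hence a scalar multiple of the mean-exit time $E$ of Section \ref{upperbounds}; constancy of $\partial E/\partial N$ on $\Sigma$ is precisely the defining condition of a harmonic domain, which closes the argument.

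The part I expect to require the most care is the equality analysis. Concretely, one must verify that equality in the Dirichlet-principle step really forces $\alpha$ to be harmonic: writing $\alpha=\hat\omega+\eta$ with $\starred J\eta=0$ and $i_N\eta=0$, integration by parts together with $\Delta\hat\omega=0$ yields $\int_\Omega(|d\alpha|^2+|\delta\alpha|^2)=\int_\Omega(|d\hat\omega|^2+|\delta\hat\omega|^2)+\int_\Omega(|d\eta|^2+|\delta\eta|^2)$, so that equality forces $d\eta=\delta\eta=0$, and then $\eta\equiv 0$ because a harmonic field with vanishing mixed boundary data vanishes. Everything else is direct computation plus the definitions of $T^{[n]}$ and of harmonic domain.
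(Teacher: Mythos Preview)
Your argument is correct and is essentially the same as the paper's, just written on the Hodge-dual side: the paper uses the dual operator $T^{[0]}_D$ (with $\nu_{1,0}^D(\Omega)=\nu_{1,n}(\Omega)$) and plugs the $1$-form $df$ (which has $J^\star(df)=0$) into the variational characterization \eqref{dircar}, whereas you apply $\star_\Omega$ and work with the $n$-form $\star_\Omega df$ in \eqref{varchar}. The equality analysis is identical: in both versions equality forces the test form to be harmonic, hence $\Delta f$ constant, so $f$ is a multiple of the mean-exit time $E$ and $\partial E/\partial N$ is constant.
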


In \cite{wangxia} Wang and Xia  prove that, if the Ricci curvature of $\Omega$ is non-negative and the mean curvature of $\Sigma$ is bounded  below by $H>0$, 
then $\mu_1(\Omega)\geq (n+1)H$. Moreover equality occurs if and only if $\Omega$ is isometric to a ball of $\reals^{n+1}$.
Combining Theorem \ref{munu} and our estimate of Theorem \ref{equalityone} we see that, under the given assumptions:
$$
\mu_1(\Omega)\geq \nu_{1,n}(\Omega)\geq (n+1)H
$$
which implies the result of Wang and Xia. 
On the other hand, it is easy to observe that 
$\mu_1(\Omega)\leq\vol(\Sigma)/\vol(\Omega)$ (see for example \cite{wangxia}). Then the estimate (\ref{mainiso}) is a direct consequence of this fact and 
Theorem \ref{munu}.

\smallskip

The paper is organized as follows. In Section \ref{basic} we state the main properties of the operator $T^{[p]}$. In Section
\ref{lb} we prove the lower bounds and in Section \ref{upperbounds} we give the proof of the upper bounds. Finally, in Section \ref{appendix},  we prove a rigidity result needed for the equality case of Theorem \ref{equalitytwo}.


\section{Generalities on the Dirichlet-to-Neumann operator} \label{basic}


Before stating the main properties of $T^{[p]}$, let us recall the following well-known facts. The Hodge-de Rham theorem for 
manifolds with boundary asserts that $H^p_{\rm dR}(\Omega,\reals)$, the absolute de Rham cohomology space in degree $p$ with real 
coefficients, is isomorphic to the (finite dimensional) vector space of  harmonic $p$-forms $\phi$ satisfying the absolute boundary conditions ($i_N\phi=i_Nd\phi=0$ on $\Sigma$), which we  denote
by $H^p(\Omega)$.  Equivalently, one has:
$$
H^p(\Omega)=\{\phi\in\Lambda^p(\Omega): d\phi=\delta\phi=0 \,\,\text{on}\,\,\Omega, \, i_N\phi=0\,\,\text{on}\,\,\Sigma\}.
$$
By duality, the relative de Rham cohomology space in degree $p$ is isomorphic to the vector space
$$
H^p_R(\Omega)=\{\phi\in\Lambda^p(\Omega): d\phi=\delta\phi=0 \,\,\text{on}\,\,\Omega, \, \starred J\phi=0\,\,\text{on}\,\,\Sigma\}.
$$

\begin{thm}\label{FirstProp} 
Let $\Omega^{n+1}$ be a compact domain with smooth boundary $\Sigma^n$. Let $T^{[p]}$ be the Dirichlet-to-Neumann operator acting on $p-$forms of  $\Sigma$, as defined in Section \ref{def}. Then:
\item (a)  $T^{[p]}$ is nonnegative and self-adjoint.      

\item (b) The kernel of $T^{[p]}$ consists of the boundary values of absolute cohomology classes, and  the restriction 
$\starred J$ induces an isomorphism between $H^p(\Omega)$ and ${\rm Ker}(T^{[p]})$.

\item (c) $T^{[p]}$ is an elliptic pseudo-differential operator of order one. Hence  it admits an increasing sequence of eigenvalues with finite multiplicities
$$
\nu_{1,p}(\Omega)\leq\nu_{2,p}(\Omega)\leq\dots
$$
with $\nu_{1,p}(\Omega)=0$ repeated $b_p(\Omega)={\rm dim}\,H^p(\Omega)$ times. In particular, $\nu_{1,p}(\Omega)>0$ if and only if $H^p(\Omega)=0$.

\item (d) The first  eigenvalue of $T^{[p]}$ satisfies the  min-max principle
\begin{eqnarray}\label{varchar}
\nu_{1,p}(\Omega)={\inf}\left\{\dfrac
{\int_{\Omega}\norm{d\hat\phi}^2+\norm{\delta\hat\phi}^2}
{\int_{\Sigma}\norm{\hat\phi}^2}\right\}
\end{eqnarray}
where the infimum is taken over all $p-$forms $\hat\phi$ on $\Omega$ such that $i_N\hat\phi=0$ on $\Sigma$.
\end{thm}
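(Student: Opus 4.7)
The entire proof rests on one integration-by-parts identity. For any two $p$-forms $\omega,\omega'\in\Lambda^p(\Sigma)$ with harmonic tangential extensions $\hat\omega,\hat\omega'$, the Green formula for the Hodge-Laplace operator gives
\[
\int_\Sigma\langle\omega,T^{[p]}\omega'\rangle=\int_\Omega\bigl(\langle d\hat\omega,d\hat\omega'\rangle+\langle\delta\hat\omega,\delta\hat\omega'\rangle\bigr);
\]
the bulk contribution $\int_\Omega\langle\Delta\hat\omega,\hat\omega'\rangle$ vanishes because $\hat\omega$ is harmonic, one boundary term drops because $i_N\hat\omega'=0$, and the remaining boundary term becomes $\int_\Sigma\langle\starred{J}\hat\omega,-i_Nd\hat\omega'\rangle=\int_\Sigma\langle\omega,T^{[p]}\omega'\rangle$. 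I would take this identity as the common engine for (a), (b) and (d).

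Parts (a) and (b) then fall out immediately. Putting $\omega=\omega'$ shows that $T^{[p]}$ is non-negative, while the symmetry of the right hand side in the two arguments gives self-adjointness. For (b), if $T^{[p]}\omega=0$ the identity forces $d\hat\omega=\delta\hat\omega=0$; combined with $i_N\hat\omega=0$ this places $\hat\omega$ in $H^p(\Omega)$. Conversely, uniqueness of the harmonic tangential extension means every $\phi\in H^p(\Omega)$ is its own extension of $\starred{J}\phi$, so $T^{[p]}(\starred{J}\phi)=-i_Nd\phi=0$. Hence $\starred{J}:H^p(\Omega)\to{\rm Ker}\,T^{[p]}$ is the claimed isomorphism, with inverse the harmonic tangential extension.

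For the variational principle (d) I would combine the identity with the following Dirichlet principle: among all $p$-forms $\hat\phi$ on $\Omega$ satisfying $i_N\hat\phi=0$ and a prescribed tangential trace $\starred{J}\hat\phi=\omega$, the energy $\int_\Omega\|d\hat\phi\|^2+\|\delta\hat\phi\|^2$ is minimized exactly by the harmonic tangential extension $\hat\omega$. This is a direct consequence of the Green identity applied to a competitor $\hat\phi=\hat\omega+\psi$ with $\starred{J}\psi=i_N\psi=0$, which kills every cross term and leaves the Pythagorean decomposition of the energy. Combining this minimality with the Rayleigh quotient characterization of the first eigenvalue of the self-adjoint operator $T^{[p]}$, and noting that $i_N\hat\phi=0$ implies $\int_\Sigma\|\hat\phi\|^2=\int_\Sigma\|\starred{J}\hat\phi\|^2$, yields the free min-max stated in (d).

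Part (c), the ellipticity, is the only step that is not a formal consequence of the Green identity and I expect it to be the main obstacle. Here I would invoke Carron's framework \cite{carron}, which embeds $T^{[p]}$ as the member at $z=0$ of a holomorphic family $\{T^{[p]}_z\}_{z\in\mathbb C}$ associated to a Lopatinski-elliptic boundary value problem for $\Delta+z^2$. The Poisson-type map $\omega\mapsto\hat\omega$ is then a classical pseudo-differential operator $\Lambda^p(\Sigma)\to\Lambda^p(\Omega)$, and composition with $-i_Nd$ presents $T^{[p]}$ as a pseudo-differential operator of order one. A local computation at the boundary, identical in structure to Calder\'on's classical computation for $p=0$, shows that the principal symbol at $(x,\xi)\in T^\star\Sigma\setminus\{0\}$ is $|\xi|$ times the identity on $\Lambda^p(T^\star_x\Sigma)$; this is elliptic, so $T^{[p]}$ is an elliptic, non-negative, self-adjoint pseudo-differential operator of order one on the closed manifold $\Sigma$. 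Discreteness of its spectrum with finite multiplicities, and the count $b_p(\Omega)$ of the zero eigenvalue from (b), are then standard.
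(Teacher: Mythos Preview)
Your proposal is correct and follows essentially the same route as the paper: the Green/Stokes identity giving $\int_\Sigma\langle T^{[p]}\phi,\psi\rangle=\int_\Omega\langle d\hat\phi,d\hat\psi\rangle+\langle\delta\hat\phi,\delta\hat\psi\rangle$ is used for (a) and (b), Carron's family $T_z$ is invoked for (c), and the Dirichlet principle (harmonic tangential extensions minimize energy among tangential competitors) yields (d). The only minor differences are cosmetic: in (b) you deduce $d\hat\omega=\delta\hat\omega=0$ directly from the energy identity whereas the paper reads off the absolute boundary conditions $i_N\hat\omega=i_Nd\hat\omega=0$ and appeals to the characterization of $H^p(\Omega)$, and in (c) you add an explicit statement about the principal symbol that the paper leaves implicit in the reference to \cite{carron}.
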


We remark that $(b)$ has already been  observed  in \cite{shara}.

\begin{proof}

\smallskip

{\it  $(a)$} We prove that the operator is self-adjoint. 
Recall the Stokes formula:
\begin{eqnarray*}
\int_{\Omega}\dotp{d\omega_1}{\omega_2}=\int_{\Omega}\dotp{\omega_1}{\delta\omega_2}
-\int_{\Sigma}\dotp{\starred J\omega_1}{i_N\omega_2}
\end{eqnarray*}
for all $\omega_1\in\Lambda^{p-1}(\Omega)$ and $\omega_2\in\Lambda^p(\Omega)$. Now let $\phi,\psi\in\Lambda^p(\Sigma)$ and denote by $\hat\phi$, $\hat\psi$ their harmonic 
tangential extensions on $\Omega$. The definition of $T^{[p]}$ and the Stokes formula give:
$$
\int_{\Sigma}\dotp{T^{[p]}\phi}{\psi} =  -\int_{\Sigma}\dotp{i_Nd\hat\phi}{\starred J\hat\psi}= \int_{\Omega}\dotp{d\hat\phi}{d\hat\psi}-\dotp{\delta d\hat\phi}{\hat\psi}.
$$
As $\hat\psi$ is harmonic and $i_N\hat\psi=0$ we have
$$
-\int_{\Omega}\dotp{\delta d\hat\phi}{\hat\psi}=\int_{\Omega}
\dotp{d \delta\hat\phi}{\hat\psi}=\int_{\Omega}\dotp{\delta \hat\phi}{\delta\hat\psi}.
$$
So $\int_{\Sigma}\dotp{T^{[p]}\phi}{\psi}=\int_{\Omega}\dotp{d\hat\phi}{d\hat\psi}+\dotp{\delta \hat\phi}{\delta\hat\psi}$
which shows that  $T^{[p]}$ is self-adjoint. Taking $\psi=\phi$  yields:
\begin{eqnarray*}
\int_{\Sigma}\dotp{T^{[p]}\phi}{\phi}=\int_{\Omega}\norm{d\hat\phi}^2+\norm{\delta\hat\phi}^2\geq 0.
\end{eqnarray*}
and $T^{[p]}$ is nonnegative.  

\smallskip

\it $(b)$ \rm If
$\phi\in {\rm Ker}(T^{[p]})$ then its harmonic tangential extension $\hat\phi$ satisfies, on $\Sigma$:
$i_N\hat\phi= i_N d\hat\phi=0$.
Hence $\phi$ is the restriction of a form (cohomology class) in $H^p(\Omega)$. Conversely, it is clear by the definition that an absolute cohomology class 
restricts to a form in the kernel of $T^{[p]}$. Then:
$$
{\rm Ker}(T^{[p]})=\starred J(H^p(\Omega)).
$$
We observe that the map $\starred J: H^p(\Omega)\to\starred J\big(H^p(\Omega)\big)$ is injective: in fact, if 
$\starred J\hat\phi=0$ for some cohomology class $\hat\phi$, then $\hat\phi$ is harmonic and zero on the boundary, which 
implies $\hat\phi=0$. Then the dimension of ${\rm Ker}(T^{[p]})$ equals $b_p(\Omega)$. 

\smallskip

 {\it $(c)$}  The proof that $T^{[p]}$ is an elliptic pseudo-differential operator follows the lines of the proof done in Section 6.4 of \cite{carron}. 
There, in studying determinants,  G. Carron considers the linear operator $T_z:\Lambda^p(\Sigma)\to \Lambda^p(\Sigma)$ depending on a complex parameter $z\in{\bf C}\setminus [0,\infty)$ and 
 defined by
$$
T_z\phi=-i_Nd\hat\phi_z,
$$
where $\hat\phi_z$ is the unique solution of 
\begin{equation}\label{z}
\twosystem
{\Delta\hat\phi_z=z\hat\phi_z\quad\text{on}\quad \Omega,}
{i_N\hat\phi_z=0, \starred J\hat\phi_z=\phi \quad\text{on}\quad \Sigma.}
\end{equation}
Carron shows that $T_z$ is an elliptic, pseudo-differential, invertible operator. In fact, the inverse $S_z$ of $T_z$ is shown to be the operator obtained by 
restricting to the boundary the Green kernel of the Hodge Laplacian $\Delta$ acting on $p$-forms of $\Omega$, for the absolute boundary conditions; as $S_z$ 
is pseudo-differential of order $-1$, the operator $T_z$ is pseudo-differential of order $1$.  The restriction on $z$ is imposed precisely because then $T_z$ 
will be  invertible, since $z$ avoids the spectrum of $\Delta$ (which is contained in the nonnegative half-line).

\smallskip

Our operator is obtained by taking $z=0$ in \eqref{z}: it is no longer invertible when $H^p(\Omega)\ne \{0\}$  but it is still pseudo-differential and elliptic because, 
by $(b)$, its kernel is finite dimensional, isomorphic to $H^p(\Omega)$. In fact, the operator $S_0$ is now invertible modulo compact operators, given by the 
projection onto the kernel of $T_0$ and its transpose. The rest of Carron's proof carries over and so $T_0=T^{[p]}$ is an elliptic PDO of order $1$. More 
generally, $T_z$ is an elliptic PDO for all $z$, and is invertible as long as $z$ does not belong to the spectrum of $\Delta$.

\smallskip

The rest of $(c)$ now follows from the standard theory of elliptic PDO (see \cite{shubin}).

\smallskip

{\it $(d)$} \rm The min-max principle gives
$$
\nu_{1,p}(\Omega)=\inf\left\{\dfrac{\int_{\Omega}\norm{d\hat\phi}^2+\norm{\delta\hat\phi}^2}
{\int_{\Sigma}\norm{\hat\phi}^2}: \Delta\hat\phi=0, i_N\hat\phi=0\right\}.
$$
We only have to show that we can remove the condition $\Delta\hat\phi=0$. This follows from the fact that among all tangential 
extensions $\xi$ of a given form $\phi\in\Lambda^p(\Sigma)$, the harmonic tangential extension $\hat\phi$ minimizes the quadratic form
$\int_{\Omega}\norm{d\xi}^2+\norm{\delta\xi}^2$. Indeed, assume that $\starred J\xi=\phi=\starred J\hat\phi$ and $i_N\xi=0=i_N\hat\phi$.  
Let  $\psi=\xi-\hat\phi$ so that $\psi=0$ on the boundary. Using the Stokes formula one verifies that:
$$
0\leq\int_{\Omega}\norm{d\psi}^2+\norm{\delta\psi}^2=\int_{\Omega}\norm{d\xi}^2+\norm{\delta\xi}^2-\int_{\Omega}\norm{d\hat\phi}^2+\norm{\delta\hat\phi}^2,
$$
and the assertion  follows. 
\end{proof}

\medskip

 
\subsection{The dual problem}\label{dual}


Let $p=0,\dots,n$. Given a $p$-form $\phi$ on $\Sigma$ consider the unique $(p+1)$-form $\tilde\phi$ on $\Omega$ which satisfies:
$$
\twosystem
{\Delta\tilde\phi=0\quad\text{on}\quad \Omega}
{\starred J\tilde\phi=0,\,\,i_N\tilde\phi=\phi\quad\text{on}\quad \Sigma.}
$$
The form $\tilde\phi$ will be called the {\it harmonic normal extension} of $\phi$. Its existence and uniqueness is also
proved in Schwarz \cite{schwarz}. We set 
\begin{eqnarray*}
T^{[p]}_D\phi=\starred J(\delta\tilde\phi)
\end{eqnarray*}
and call $T^{[p]}_D$ {\it the relative Dirichlet-to-Neumann operator}. 
It defines another elliptic pseudo-differential operator of order one acting on $\Lambda^{p}(\Sigma)$, which is self-adjoint and nonnegative. These properties can easily 
be derived from Theorem \ref{FirstProp} and the fact that $T^{[p]}_D$ is related to the absolute Dirichlet-to-Neumann operator by the identity 
$T^{[p]}_D=(-1)^{p(n-p)}\star_\Sigma T^{[n-p]}\star_\Sigma$, where $\star_\Sigma$ denotes the Hodge-star operator acting on forms on $\Sigma$. Denoting by 
$\nu^D_{1,p}(\Omega)$ the first eigenvalue of $T^{[p]}_D$, we have
$$
\nu^D_{1,p}(\Omega)=\nu_{1,n-p}(\Omega).
$$
Moreover, the min-max principle for the dual problem takes the form:
\begin{eqnarray}\label{dircar}
\nu_{1,p}^D(\Omega)={\inf}\left\{\dfrac
{\int_{\Omega}\norm{d\hat\phi}^2+\norm{\delta\hat\phi}^2}
{\int_{\Sigma}\norm{\hat\phi}^2}:\hat\phi\in\Lambda^{p+1}(\Omega),\,\starred J\hat\phi=0\right\}.
\end{eqnarray}

Note that $T^{[0]}_D$ is an operator acting on functions, which clearly differs from the operator $T^{[0]}$.


\section{Lower bounds: proofs}\label{lb}



\subsection{Reilly formula for differential forms}


The main tool used in the proof of the lower bound is a  Reilly-type formula for differential forms 
proved by the authors in \cite{raulotsavo}, which we state below. 

\smallskip

Denote by $S$  the shape operator of the immersion of $\Sigma$ in $\Omega$; it is defined as 
$S(X)=-\nabla_XN$ for all tangent vectors $X\in T\Sigma$. $S$ admits a canonical extension acting on $p$-forms on $\Sigma$ and 
denoted by $S^{[p]}$. Explicitly, if $\omega$ is a $p$-form on $\Sigma$ one has:
\begin{eqnarray*}
S^{[p]}\omega(X_1,\dots,X_p)=\sum_{j=1}^p \omega(X_1,\dots,S(X_j),\dots,X_p),
\end{eqnarray*}
for tangent vectors $X_1,\dots,X_p\in T\Sigma$. It is clear from the definition that the eigenvalues of $S^{[p]}$ are precisely 
the $p$-curvatures of $\Sigma$: therefore we have immediately
\begin{eqnarray*}
\dotp{S^{[p]}\omega}{\omega}\geq \sigma_p(\Sigma)\norm{\omega}^2
\end{eqnarray*}
at all points of $\Sigma$ and for all $p$-forms $\omega$. Now let $\omega$ be a $p-$form on $\Omega$. The Reilly formula says that 
\begin{eqnarray}\label{r}
\int_{\Omega}\norm{d\omega}^2+\norm{\delta\omega}^2=
\int_{\Omega}\norm{\nabla\omega}^2+\scal{W^{[p]}(\omega)}{\omega}
+2\int_{\Sigma}\scal{i_N\omega}{\delta^{\Sigma}(J^{\star}\omega)}
+\int_{\Sigma}{\Cal B}(\omega,\omega),
\end{eqnarray}
where the boundary term has the following expression: 
$$
\begin{aligned}
{\Cal B}(\omega,\omega)&=\scal{S^{[p]}(J^{\star}\omega)}{J^{\star}\omega}
+nH\norm{i_N\omega}^2-\scal{S^{[p-1]}(i_N\omega)}{i_N\omega}\\
&=
\scal{S^{[p]}(J^{\star}\omega)}{J^{\star}\omega}
+\scal{S^{[n-p+1]}(J^{\star}\star\omega)}{J^{\star}\star\omega}
\end{aligned}
$$
By convention, we set $S^{[0]}=S^{[n+1]}=0$.
For a detailed proof of \eqref{r} see \cite{raulotsavo}.


\subsection{Proof of Theorem \ref{lowerbound}}


We assume that $W^{[p]}\geq 0$, and that the $p$-curvatures of $\Sigma$ are bounded below by $\sigma_p(\Sigma)>0$. We have to prove that, if
$p<\frac{n+1}2$ then:
\begin{equation}\label{boundone}
\nu_{1,p}(\Omega)>\frac{n-p+2}{n-p+1}\sigma_p(\Sigma),
\end{equation}
and if $p\geq\frac{n+1}2$ then
\begin{equation}\label{boundtwo}
\nu_{1,p}(\Omega)\geq \frac{p+1}{p}\sigma_p(\Sigma).
\end{equation}

Let $\omega$ be an eigenform associated to $\nu_{1,p}(\Omega)$ and let $\hat\omega$ be its harmonic tangential extension to $\Omega$. By the variational 
characterization (\ref{varchar}):
\begin{equation}\label{proofone}
\int_{\Omega}\norm{d\hat\omega}^2+\norm{\delta\hat\omega}^2=\nu_{1,p}(\Omega)\int_{\Sigma}\norm{\omega}^2
\end{equation}
because, on the boundary, $\norm{\hat\omega}^2=\norm{\omega}^2+\norm{i_N\hat\omega}^2=\norm{\omega}^2$.
We apply the Reilly formula to $\hat\omega$. As $W^{[p]}\geq 0$ and $i_N\hat\omega=0$ we get
\begin{equation}\label{prooftwo}
\begin{aligned}
\int_{\Omega}\Big(\norm{d\hat\omega}^2+\norm{\delta\hat\omega}^2\Big)&\geq \int_{\Omega}\norm{\nabla\hat\omega}^2
+\int_{\Sigma}\dotp{S^{[p]}(\omega)}{\omega}\\
&\geq \int_{\Omega}\norm{\nabla\hat\omega}^2
+\sigma_{p}(\Sigma)\int_{\Sigma}\norm{\omega}^2
\end{aligned}
\end{equation}

We will use the following estimate of Gallot and Meyer \cite{gallot-meyer}, valid  for any $p$-form $\hat\omega$:
\begin{eqnarray}\label{twist}
\norm{\nabla\hat\omega}^2\geq \dfrac{\norm{d\hat\omega}^2}{p+1}+\dfrac{\norm{\delta\hat\omega}^2}{n-p+2}.
\end{eqnarray}
When $p<\frac{n+1}{2}$ one has $p+1< n-p+2$ hence:
\begin{equation}\label{proofthree}
\norm{\nabla\hat\omega}^2\geq \dfrac{\norm{d\hat\omega}^2+\norm{\delta\hat\omega}^2}{n-p+2},
\end{equation}
and the equality implies $d\hat\omega=0$. Inserting \eqref{proofthree} in \eqref{prooftwo}, and taking into account \eqref{proofone}, we obtain 
\eqref{boundone}. Note that then $\nu_{1,p}(\Omega)>0$. Equality in \eqref{proofthree} implies that $d\hat\omega=0$ hence $i_Nd\hat\omega=0$: but this is impossible because 
otherwise $\nu_{1,p}(\Omega)=0$. So the inequality is always strict.

\smallskip

If $p\geq \frac{n+1}{2}$ one has
\begin{equation}\label{prooffour}
\norm{\nabla\hat\omega}^2\geq \dfrac{\norm{d\hat\omega}^2+\norm{\delta\hat\omega}^2}{p+1}
\end{equation}
and proceeding as before we obtain \eqref{boundtwo}. The inequality \eqref{boundtwo} is sharp: for the unit Euclidean ball we have $\sigma_p(\Sigma)=p$ and $\nu_{1,p}(\ball)=p+1$ (see Proposition \ref{ballspec}).
We finally remark that, if $p>\frac{n+1}2$ and the equality holds in \eqref{boundtwo}, it holds also in \eqref{prooffour} and then $\delta\hat\omega=0$.
\hfill$\square$\\

Now we study the equality case of this estimate. Recall that the $p$-form $\hat\omega$ is a {\it conformal Killing form} if it satisfies the differential equation
\begin{eqnarray*}
\nabla_X\hat\omega=\frac{1}{p+1}i_Xd\hat\omega-\frac{1}{n-p+2}X^*\wedge\delta\hat\omega
\end{eqnarray*}
for all $X\in T\Omega$.
A co-closed conformal Killing form is called a {\it Killing form}. It is well-known that the inequality \eqref{twist} is an equality if and only if 
$\hat\omega$ is a conformal Killing form (see for example \cite{gallot-meyer}). We then have:
\begin{prop}\label{equalitycase}
Assume $p\geq (n+1)/2$. If equality holds in (\ref{boundtwo}) then the harmonic tangential extension of a $p$-eigenform 
associated to $\nu_{1,p}(\Omega)$ is a conformal Killing $p$-form (a Killing form if $p>\frac{n+1}{2}$) and the $p$ lowest principal 
curvatures of the boundary are constant, equal to $c={\nu_{1,p}(\Omega)}/(p+1)$.  
\end{prop}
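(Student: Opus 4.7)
\emph{Plan.} The proof traces equality through the chain of inequalities (\ref{proofone})--(\ref{prooffour}) used in the proof of Theorem \ref{lowerbound}(b). Let $\omega$ be an eigenform with eigenvalue $\nu_{1,p}(\Omega) = \frac{p+1}{p}\sigma_p(\Sigma)$ and let $\hat\omega$ be its harmonic tangential extension. Equality in (\ref{boundtwo}) forces each intermediate inequality to be an equality as well.

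\emph{Conformal Killing / Killing form.} The step (\ref{prooffour}) was obtained from the sharp Gallot--Meyer inequality (\ref{twist}) by replacing the coefficient $\frac{1}{n-p+2}$ with the smaller $\frac{1}{p+1}$ in front of $\normsq{\delta\hat\omega}$, which is legitimate for $p \geq (n+1)/2$. When $p = (n+1)/2$ these coefficients coincide and equality in (\ref{prooffour}) is equality in (\ref{twist}); the latter characterizes conformal Killing forms. When $p > (n+1)/2$ we have $n-p+2 < p+1$ strictly, so the estimate is wasteful unless $\delta\hat\omega \equiv 0$; this co-closedness together with equality in (\ref{twist}) upgrades conformal Killing to Killing.

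\emph{Pointwise boundary identities.} Equality in (\ref{prooftwo}) requires $W^{[p]}(\hat\omega)=0$ pointwise on $\Omega$ and $\dotp{S^{[p]}\omega}{\omega} = \sigma_p(\Sigma)\normsq{\omega}$ pointwise on $\Sigma$. Chaining with $\dotp{S^{[p]}\omega}{\omega} \geq \sigma_p(x)\normsq{\omega} \geq \sigma_p(\Sigma)\normsq{\omega}$, both inequalities must be equalities wherever $\omega(x)\neq 0$: at such $x$, $\omega(x)$ lies in the lowest eigenspace of $S^{[p]}(x)$ and $\sigma_p(x) = \sigma_p(\Sigma) =: pc$. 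By unique continuation for nonzero (conformal) Killing forms on the connected manifold $\Omega$, $\hat\omega$ cannot vanish on an open set, hence $\omega$ is nonvanishing on a dense subset of $\Sigma$. By continuity of the principal curvatures, $\eta_1(x) + \dots + \eta_p(x) = pc$ everywhere on $\Sigma$, and the identity $c = \nu_{1,p}(\Omega)/(p+1)$ is immediate from $\nu_{1,p}(\Omega) = \frac{p+1}{p}\sigma_p(\Sigma)$.

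\emph{Main obstacle.} The substantive work is to upgrade the constancy of the \emph{sum} $\eta_1 + \dots + \eta_p$ to the individual equalities $\eta_1 = \dots = \eta_p = c$. My plan is to exploit the (conformal) Killing equation at the boundary: the normal derivative $\nabla_N \hat\omega$ is prescribed in terms of $i_N d\hat\omega = -T^{[p]}\omega = -\nu_{1,p}(\Omega)\omega$, while the tangential derivatives $\nabla_X \hat\omega$ for $X \in T\Sigma$ bring in shape operator terms. Comparing these with the pointwise condition that $\omega(x)$ lies in the eigenspace of $S^{[p]}(x)$ with eigenvalue $pc$ on a dense subset of $\Sigma$ should yield the desired individual principal curvature equalities. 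This rigidity step, rather than the tracing of equality, is the main obstacle.
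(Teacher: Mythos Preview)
Your approach is essentially the paper's, and your plan for the ``main obstacle'' is exactly right: the paper completes it via the conformal Killing equation at the boundary. The key identity comes from computing $i_N(\nabla_X\hat\omega)$ for $X\in T\Sigma$ in two ways. Differentiating $i_N\hat\omega=0$ (Gauss formula) gives $i_N\nabla_X\hat\omega=i_{S(X)}\omega$; on the other hand, the conformal Killing equation together with $i_N\delta\hat\omega=-\delta^\Sigma(i_N\hat\omega)=0$ gives $i_N\nabla_X\hat\omega=-\tfrac{1}{p+1}i_X(i_Nd\hat\omega)=c\,i_X\omega$. Hence $i_{S(X)-cX}\omega=0$ for every tangential $X$, and choosing $X$ among principal directions $v_{j_1},\dots,v_{j_p}$ with $\omega(v_{j_1},\dots,v_{j_p})\neq 0$ forces the corresponding $p$ principal curvatures to equal $c$ at any point where $\omega\neq 0$.

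Two remarks. First, this identity makes your detour through equality in $\dotp{S^{[p]}\omega}{\omega}\geq\sigma_p(\Sigma)\normsq{\omega}$ unnecessary: you get the \emph{individual} equalities $\eta_{j_k}=c$ directly, not merely the constancy of the sum $\sigma_p(x)$, so there is nothing to ``upgrade''. Second, note that the relevant quantity is the normal \emph{component} of $\nabla_X\hat\omega$ for tangential $X$, rather than $\nabla_N\hat\omega$ as you wrote; and the paper justifies the density argument by unique continuation for the elliptic operator $T^{[p]}$ rather than for Killing forms, though either works.
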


\begin{proof}
Looking at the proof of \eqref{boundtwo} we see immediately that if the equality holds then $\hat\omega$ is a conformal Killing form and, by the last remark 
in the proof, it is a Killing form when $p>\frac{n+1}2$. It remains to show the last assertion. Now, the Gauss formula leads to the following relations 
(see Section 6 in \cite{raulotsavo}):
\begin{equation}\label{derivative}
\twosystem
{\nabla_X^{\Sigma}(i_N\hat\omega)=i_N\nabla_X\hat\omega-i_{S(X)}\starred J\hat\omega}
{\nabla_X^{\Sigma}(\starred J\hat\omega)=\starred J(\nabla_X\hat\omega)+S(X)^{\star}\wedge i_N\hat\omega,}
\end{equation}
for all $X\in T\Sigma$,  where $\nabla^{\Sigma}$ is the Levi-Civita connection of $\Sigma$. Since $\hat\omega$ is the 
harmonic tangential extension of $\omega$, we have $i_N\hat\omega=0$ and the first equation in (\ref{derivative}) reads:  
\begin{eqnarray}\label{normal}
i_N\nabla_X\hat\omega=i_{S(X)}\omega.
\end{eqnarray}
On the other hand, since $\hat\omega$ is a conformal Killing $p$-form we have for all $X\in\Gamma(T\Sigma)$:
\begin{equation}\label{normalone}
 i_N\nabla_X\hat\omega = -\frac{1}{p+1}i_X(i_Nd\hat\omega)=\frac{\nu_{1,p}(\Omega)}{p+1}i_X\omega.
\end{equation}
We used the fact that $i_N\delta\hat\omega=-\delta^\Sigma(i_N\hat\omega)=0$, which immediately implies $i_N(\starred X\wedge\delta\hat\omega)=0$.
Combining (\ref{normal}) and (\ref{normalone}) gives:
\begin{equation}\label{normaltwo}
i_{S(X)-\frac{\nu_{1,p}(\Omega)}{p+1}X}\omega=0
\end{equation}
for all $X\in\Gamma(T\Sigma)$. The form $\omega$, being an eigenform of an elliptic operator, can't vanish on an open set and therefore is non-zero a.e. on 
$\Sigma$. Take a point $x$ where it does not vanish: then, at $x$, there exists $p$ principal directions, say $v_{1},\dots,v_{p}$, such that 
$\omega(v_{1},\dots,v_{p})\ne 0$. Choosing successively $X=v_{1}, \dots,v_p$ one sees from \eqref{normaltwo} that the associated principal curvatures
satisfy $\lambda_1=\dots=\lambda_p=\frac{\nu_{1,p}(\Omega)}{p+1}$. 
\end{proof}


\subsection{Proof of Theorem \ref{equalityone}}


Assume that $\Omega^{n+1}$ has nonnegative Ricci curvature and that $\Sigma$ has mean curvature bounded below by $H>0$. Then $\sigma_n=nH$ and applying Theorem \ref{lowerbound} for $p=n$ we get
$\nu_{1,n}(\Omega)\geq (n+1)H$. It remains to show that, if the equality holds, then $\Omega$ is a Euclidean ball. Now, under the given assumptions, we have $\vol(\Sigma)/\vol(\Omega)\geq (n+1)H$ by Theorem 1 in \cite{ros}, with equality if and only if $\Omega$ is a Euclidean ball. 
It is then enough to show that
$$
\isoper=(n+1)H.
$$
From Proposition \ref{equalitycase}, we know that if $\omega\in\Lambda^n(\Sigma)$ is a eigenform associated with $\nu_{1,n}(\Omega)=(n+1)H$, then its 
harmonic tangential extension $\hat\omega\in\Lambda^n(\Omega)$ is a Killing $n$-form on $\Omega$; in particular, $\delta\hat\omega=0$. We can write 
$d\hat\omega=f\Psi_{\Omega}$, where $\Psi_{\Omega}$ is the volume form of $\Omega$ and $f$ is a smooth function. As $\hat\omega$ is harmonic and co-closed,
 we have
$$
0=\delta d\hat\omega=\delta (f\Psi_{\Omega})=-i_{\nabla f}\Psi_{\Omega},
$$
which immediately implies $\nabla f=0$. By renormalization, we can assume that $f=1$ and so $d\hat\omega$ is the volume form of $\Omega$. 
By assumption,  $\starred Jd\hat\omega=0$ and $i_Nd\hat\omega=-(n+1)H\omega$. Then,  on $\Sigma$
$$
1=\norm{i_Nd\hat\omega}^2=(n+1)^2H^2\norm{\omega}^2.
$$
On the other hand, by the Stokes formula and the fact that $d\hat\omega$ has constant unit norm:
$$
\vol(\Omega)=\int_{\Omega}\norm{d\hat\omega}^2=-\int_{\Sigma}
\dotp{\omega}{i_Nd\hat\omega}=(n+1)H\int_{\Sigma}\norm{\omega}^2=
\dfrac{\vol(\Sigma)}{(n+1)H},
$$
which proves the assertion.


\subsection{The equality case for Euclidean domains: proof of Theorem \ref{equalitytwo}}\label{equalityeuclidean}


We fix $c>0$ and let ${\cal F}_p(c)$ denote the set of $p$-forms $\hat\omega$ on $\Omega$, $p=0,\dots,n$, with the following properties:

\parte a $\hat\omega$ is  harmonic and tangential (that is $i_N\hat\omega=0$ on $\Sigma$).

\parte b $\hat\omega$ is Killing and $d\hat\omega$ is parallel.

\parte c $i_Nd\hat\omega=-(p+1)c\omega$, where $\omega=\starred J\hat\omega$ is the restriction to $\Sigma$.

\smallskip

Note that ${\cal F}_0(c)$ consists of all harmonic functions $\hat f$ with parallel gradient and such that $\frac{\bd\hat  f}{\bd N}=-c\hat f$: if $\hat f$ 
is not trivial, its restriction to the boundary is a Dirichlet-to-Neumann eigenfunction associated to the eigenvalue $c$.

\smallskip

\begin{lemme}\label{Fpc}
Let $p\geq 1$. If $\hat\omega\in{\cal F}_p(c)$ and $V$ is a parallel vector field on $\real{n+1}$, then $i_V\hat\omega\in{\cal F}_{p-1}(c)$.
\end{lemme}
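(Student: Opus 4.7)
Set $\hat\eta=i_V\hat\omega$; the task is to verify properties (a), (b), (c) of $\Cal{F}_{p-1}(c)$ for $\hat\eta$. Two facts drive the calculation: first, because $V$ is parallel in $\real{n+1}$, the operator $i_V$ commutes with $\nabla$ (and hence with $\Delta=\nabla^{\star}\nabla$, since the ambient curvature vanishes); second, because $\hat\omega$ is Killing with $\delta\hat\omega=0$, one has $\nabla_X\hat\omega=\frac{1}{p+1}i_Xd\hat\omega$ for every vector $X$.

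Property (a) follows at once: anti-commutation of interior products yields $i_N\hat\eta=-i_Vi_N\hat\omega=0$, while $\Delta\hat\eta=i_V\Delta\hat\omega=0$ by flatness. For (b), Cartan's formula and the identity $\Cal{L}_V=\nabla_V$ (valid because $V$ is parallel) give $d\hat\eta=\Cal{L}_V\hat\omega-i_Vd\hat\omega=\nabla_V\hat\omega-i_Vd\hat\omega$, and the Killing identity rewrites $\nabla_V\hat\omega$ as $\frac{1}{p+1}i_Vd\hat\omega$, so
\begin{equation*}
d\hat\eta=-\frac{p}{p+1}\,i_Vd\hat\omega,
\end{equation*}
which is parallel because both $V$ and $d\hat\omega$ are. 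To see $\hat\eta$ itself is Killing, compute $\delta(i_V\hat\omega)=-i_V\delta\hat\omega=0$ from the local frame expression of $\delta$, and verify the conformal Killing equation via
\begin{equation*}
\nabla_X\hat\eta=i_V\nabla_X\hat\omega=\frac{1}{p+1}\,i_Vi_Xd\hat\omega=-\frac{1}{p+1}\,i_Xi_Vd\hat\omega=\frac{1}{p}\,i_Xd\hat\eta.
\end{equation*}

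For (c), anti-commute $i_N$ past $i_V$ and substitute $i_Nd\hat\omega=-(p+1)c\,\omega$ on $\Sigma$:
\begin{equation*}
i_Nd\hat\eta=\frac{p}{p+1}\,i_Vi_Nd\hat\omega=-pc\,i_V\omega.
\end{equation*}
Restricted to $\Sigma$, the decomposition $V=V^T+(V\cdot N)N$ together with $i_Ni_Nd\hat\omega=0$ gives $\starred J(i_Vi_Nd\hat\omega)=i_{V^T}\starred J(i_Nd\hat\omega)$, and $\starred J\hat\eta=i_{V^T}\omega$ by the same decomposition applied to $\hat\omega$ (using $i_N\hat\omega=0$). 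Hence $i_Nd\hat\eta=-pc\,\eta$ on $\Sigma$, which is exactly property (c).

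The only delicate point is the boundary book-keeping at the end: one must split $V$ tangentially and normally and use both $i_N\hat\omega=0$ and $i_Ni_Nd\hat\omega=0$ to identify the restrictions correctly. Everything else is a mechanical consequence of the two commutation identities, and the coefficient $-\frac{p}{p+1}$ produced by the Killing hypothesis is precisely what ensures that the eigenvalue-like constant $c$ is preserved under contraction with $V$.
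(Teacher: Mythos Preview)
Your proof is correct and follows essentially the same route as the paper: both derive the key identity $d(i_V\hat\omega)=-\frac{p}{p+1}i_Vd\hat\omega$ from Cartan's formula together with $\mathcal{L}_V\hat\omega=\nabla_V\hat\omega=\frac{1}{p+1}i_Vd\hat\omega$, and then read off properties (a), (b), (c) from the obvious commutation/anticommutation rules. Your boundary book-keeping in (c), splitting $V=V^T+\langle V,N\rangle N$ and using $i_Ni_Nd\hat\omega=0$, is in fact more explicit than the paper's version, which simply writes $-pc\,i_V\hat\omega$ on the right-hand side; and your argument that $d\hat\eta$ is parallel (because it equals a constant multiple of $i_V$ of a parallel form) is slightly cleaner than the paper's ``similar calculation''.
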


\begin{proof}
 The Cartan formula gives
$
di_V\hat\omega+i_Vd\hat\omega={\cal L}_V\hat\omega,
$
where ${\cal L}_V$ is the Lie derivative along $V$. If $V$ is parallel and $\hat\omega$ is Killing, we have 
${\cal L}_V\hat\omega=\nabla_V\hat\omega=\frac1{p+1}i_Vd\hat\omega$ and then:
\begin{equation}\label{cartan}
di_V\hat\omega=-\frac{p}{p+1}i_Vd\hat\omega.
\end{equation}
Now $\nabla_Vd\hat\omega={\cal L}_Vd\hat\omega=di_Vd\hat\omega=0$ by Cartan formula and \eqref{cartan}. This holds for all parallel vector fields: in particular, any Killing
form of degree $p\geq 1$ in Euclidean space has parallel exterior derivative.

\smallskip

Fix $\hat\omega\in{\cal F}_p(c)$. As $V$ is parallel, $i_V$ commutes with $\Delta$ and anticommutes with $i_N$. Then $i_V\hat\omega$ satisfies a). 
 
\smallskip
  
As $i_V$ anticommutes with $\delta$, we see that $i_V\hat\omega$ is co-closed. On the other hand, since $V$ is parallel:
$$
\nabla_Xi_V\hat\omega=i_V\nabla_X\hat\omega=
\frac{1}{p+1}i_Vi_Xd\hat\omega=
-\frac{1}{p+1}i_Xi_Vd\hat\omega=\frac1pi_Xdi_V\hat\omega
$$
where we used \eqref{cartan} in the last equality. Hence $i_V\hat\omega$ is a Killing $(p-1)-$form. A similar calculation shows that $\nabla_Xdi_V\hat\omega=0$, 
hence $di_V\hat\omega$ is parallel and b) follows.

\smallskip

Finally, again using \eqref{cartan}: 
$$
i_Ndi_V\hat\omega=-\frac{p}{p+1}i_Ni_Vd\hat\omega=
\frac{p}{p+1}i_Vi_Nd\hat\omega=-pci_V\hat\omega,
$$
and c) follows as well. 
\end{proof}

Now assume that $\Omega$ is an extremal domain for our inequality, and let $\hat\omega$ be the tangential harmonic extension of an eigenform $\omega$ associated to $\nu_{1,p}(\Omega)$. Set $c=\nu_{1,p}(\Omega)/(p+1)$.
By Proposition \ref{equalitycase}, $\hat\omega$ is a Killing $p-$form: in particular, as observed in the proof of the Lemma \ref{Fpc}, $d\hat\omega$ is parallel. Moreover, $i_Nd\hat\omega=-(p+1)c\omega$ by definition. This means that $\hat\omega$ is a form in ${\cal F}_p(c)$. As $\hat\omega$ is non 
trivial, we can find $p$ parallel vector fields $V_1,\dots,V_p$ such that the function
$
\hat f=\hat\omega(V_1,\dots,V_p)
$
is non trivial. Applying  the lemma successively to the parallel fields $V_1,\dots,V_p$, we see that $\hat f\in {\cal F}_0(c)$, that is, $\hat f$ satisfies
$$
\twosystem
{\nabla d\hat f=0\quad\text{on $\Omega$},}
{\derive{\hat f}{N}=-c\hat f \quad\text{on $\Sigma$}.}
$$
By Proposition \ref{equalitycase},  the lowest $p$ principal curvatures are constant, equal to $c$, and then $S\geq c$. We now apply Theorem \ref{obata} in the Appendix, to conclude that $\Omega$ is a Euclidean ball. The proof of 
Theorem \ref{equalitytwo} is now complete.


\subsection{An inequality for consecutive degrees: proof of Theorem \ref{consecutive}}


We have to show that if $\Omega$ is a domain in $\reals^{n+1}$, then for all $p=1,\dots,n$:
\begin{equation}\label{consecutive1}
\nu_{1,p}(\Omega)\geq \nu_{1,p-1}(\Omega)+\frac{\sigma_p(\Sigma)}{p}.
\end{equation}

For the proof, we consider the family of unit length parallel vector fields on $\real{n+1}$, which is naturally identified with $\mathbf{S}^n$. 

\smallskip

Let $\omega\in\Lambda^p(\Sigma)$ be an eigenform associated to the eigenvalue $\nu_{1,p}(\Omega)$ and denote by $\hat\omega$ its harmonic tangential extension. 
Let $V$ be a unit length parallel vector field. Since $\Delta$ commutes with the contraction $i_V$, the $(p-1)$-form $i_{{V}}\hat\omega$ is harmonic. 
Moreover we clearly have $i_Ni_V\hat\omega=0$. Hence we can use $i_V\hat\omega$  as test form for the eigenvalue $\nu_{1,p-1}(\Omega)$, and by the min-max 
principle we have 
\begin{eqnarray}\label{zero}
\nu_{1,p-1}(\Omega)\int_{\Sigma}\norm{i_V\hat\omega}^2
\leq\int_{\Omega}\norm{di_V\hat\omega}^2+\norm{\delta i_V\hat\omega}^2
\end{eqnarray}
for all $V\in\sphere n$. Now we want to integrate this inequality with respect to $V\in\mathbf{S}^n$. In order to simplify the formulae, we use the 
renormalized measure 
$$
d\mu=\dfrac{n+1}{\vol(\sphere n)}\,{\rm dvol}_{\sphere n},
$$
where ${\rm dvol}_{\sphere n}$ is the canonical measure of $\sphere n$. Then, we have the following identities, which are valid pointwise and are proved in 
\cite{guerini-savo} (Lemma $4.8$, p. $336$):
$$
\begin{aligned}
\int_{\mathbf{S}^n}\norm{i_V\hat\omega}^2 d\mu(V) & = p\norm{\hat\omega}^2\\
\int_{\mathbf{S}^n}\norm{di_V\hat\omega}^2 d\mu(V) & =\norm{\nabla\hat\omega}^2+(p-1)\norm{d\hat\omega}^2\\
\int_{\mathbf{S}^n}\norm{\delta i_V\hat\omega}^2 d\mu(V) & =\int_{\mathbf{S}^n}
\norm{i_V\delta\hat\omega}^2 d\mu(V)= (p-1)\norm{\delta\hat\omega}^2.
\end{aligned}
$$
Integrating (\ref{zero}) with respect to $V\in\mathbf{S}^n$ and using the previous identities, we then have, by the Fubini theorem: 
$$
p\nu_{1,p-1}(\Omega)\int_{\Sigma}\norm{\hat\omega}^2
\leq\int_{\Omega}\norm{\nabla\hat\omega}^2+(p-1)\int_{\Omega}\norm{d\hat\omega}^2+\norm{\delta\hat\omega}^2.
$$
On the other hand, the Reilly formula (\ref{r}) applied to $\hat\omega$ gives:
$$
\begin{aligned}
\int_{\Omega}\norm{d\hat\omega}^2+\norm{\delta\hat\omega}^2=\int_{\Omega}\norm{\nabla\hat\omega}^2+
\int_{\Sigma}\langle S^{[p]}(\starred J\hat\omega,)\starred J\hat\omega\rangle\geq \int_{\Omega}\norm{\nabla\hat\omega}^2+
\sigma_p(\Sigma)\int_{\Sigma}\norm{\hat\omega}^2.
\end{aligned}
$$
Eliminating $\int_{\Omega}\norm{\nabla\hat\omega}^2$ in the previous two inequalities leads to:
$$
\begin{aligned}
p\nu_{1,p-1}(\Omega)\int_{\Sigma}\norm{\hat\omega}^2
&\leq p\int_{\Omega}\big(\norm{d\hat\omega}^2+\norm{\delta\hat\omega}^2\big)-\sigma_p(\Sigma)\int_{\Sigma}\norm{\hat\omega}^2\\
&=p\nu_{1,p}(\Omega)\int_{\Sigma}\norm{\hat\omega}^2-
\sigma_p(\Sigma)\int_{\Sigma}\norm{\hat\omega}^2
\end{aligned}
$$
Dividing both sides by $p\int_{\Sigma}\norm{\hat\omega}^2$ proves (\ref{consecutive1}).


\section{Upper Bounds: proofs}\label{upperbounds}



\subsection{Upper bounds by the isoperimetric ratio}\label{isoratio}


A $p$-form $\xi$ is said to be a {\it harmonic field} if  $d\xi=\delta\xi=0$. We start from the following:
\begin{prop}\label{L2}
Let $\xi$ be a harmonic field of degree $p$ on $\Omega$. 

\item (a) If $\xi$ is exact and $p=2,\dots,n+1$ then
$\nu_{1,p-1}(\Omega)\int_{\Omega}\norm{\xi}^2\leq \int_{\Sigma}\norm{i_N\xi}^2$.

\item (b) If $\xi$ is exact and $p=1$ then $\nu_{2,0}(\Omega)\int_{\Omega}\norm{\xi}^2\leq \int_{\Sigma}\norm{i_N\xi}^2$.

\item (c) If $\xi$ is co-exact and $p=1,\dots,n$ then $\nu_{1,n-p}(\Omega)\int_{\Omega}\norm{\xi}^2\leq \int_{\Sigma}\norm{\starred J\xi}^2$.
\end{prop}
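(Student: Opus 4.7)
The strategy for all three parts is the same: exhibit a primitive (for (a), (b)) or, via Hodge duality, use the primitive of a dual form (for (c)), pair it against $\xi$ via Stokes' formula and Cauchy-Schwarz on $\Sigma$, and close the bound by using this auxiliary form as a test in the min-max \eqref{varchar}.

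For part (a), I would use the Hodge-Morrey-Friedrichs decomposition (see Schwarz \cite{schwarz}) to select a $(p-1)$-form $\alpha$ on $\Omega$ with
\[
d\alpha=\xi,\qquad \delta\alpha=0,\qquad i_N\alpha=0 \text{ on }\Sigma.
\]
Since $\delta\xi=0$, Stokes' formula gives
\[
\int_\Omega\norm{\xi}^2=\int_\Omega\scal{d\alpha}{\xi}=-\int_\Sigma\scal{\starred J\alpha}{i_N\xi},
\]
and Cauchy-Schwarz on $\Sigma$ yields $\int_\Omega\norm{\xi}^2\le\bigl(\int_\Sigma\norm{\starred J\alpha}^2\bigr)^{1/2}\bigl(\int_\Sigma\norm{i_N\xi}^2\bigr)^{1/2}$. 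Now $\alpha$ is a legitimate test form in \eqref{varchar} because $i_N\alpha=0$; this same condition gives $\norm{\alpha}^2=\norm{\starred J\alpha}^2$ on $\Sigma$, while $\delta\alpha=0$ and $d\alpha=\xi$ give $\int_\Omega\norm{d\alpha}^2+\norm{\delta\alpha}^2=\int_\Omega\norm{\xi}^2$. Hence $\nu_{1,p-1}(\Omega)\int_\Sigma\norm{\starred J\alpha}^2\le\int_\Omega\norm{\xi}^2$; substituting this into the squared Cauchy-Schwarz bound and dividing by $\int_\Omega\norm{\xi}^2$ (assuming $\xi\not\equiv 0$) yields (a).

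Part (b) is essentially the same argument with $\alpha=f$ a primitive function of $\xi=df$; automatically $\Delta f=\delta\xi=0$, and the tangential condition $i_N f=0$ is vacuous. The subtlety is that $\nu_{1,0}(\Omega)=0$, so one invokes $\nu_{2,0}$ on the space of functions with zero boundary mean, by replacing $f$ with $f-\bar f$; this modification leaves $\xi=df$ and $i_N\xi$ untouched. Part (c) then follows from (a) by Hodge duality on $\Omega$: if $\xi$ is coexact of degree $p$, then $\star\xi$ is a harmonic exact $(n+1-p)$-form with $\norm{\star\xi}=\norm{\xi}$ in $\Omega$, and the standard boundary identity $i_N\circ\star=\pm\,\star_\Sigma\circ\starred J$ gives $\norm{i_N\star\xi}=\norm{\starred J\xi}$ on $\Sigma$. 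Applying (a) to $\star\xi$ in degree $n+1-p\ge 2$ then yields (c) for $p\le n-1$, while the remaining case $p=n$ is trivial because $\nu_{1,0}(\Omega)=0$.

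The only nontrivial ingredient is the construction of the gauge-fixed primitive $\alpha$ in (a) --- that is, the statement that an exact $p$-form on a compact Riemannian manifold with boundary admits a co-closed primitive with vanishing normal component. This is a standard consequence of the Hodge-Morrey-Friedrichs decomposition \cite{schwarz}; granting it, the whole proof reduces to a short computation combining Stokes' formula, Cauchy-Schwarz, and the variational characterization \eqref{varchar}.
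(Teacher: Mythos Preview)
Your proof is correct and follows essentially the same route as the paper's: the paper too chooses the canonical (co-closed, tangential) primitive $\omega$ of $\xi$ via the Hodge--Morrey decomposition, uses it as a test form in \eqref{varchar}, and combines this with the Stokes identity $\int_\Omega\norm{d\omega}^2=-\int_\Sigma\scal{i_Nd\omega}{\starred J\omega}$ and Cauchy--Schwarz to eliminate $\int_\Sigma\norm{\omega}^2$. Your handling of (b) and the duality argument for (c) also match the paper's; your explicit remark that the case $p=n$ of (c) is trivial because $\nu_{1,0}(\Omega)=0$ is a small clarification the paper leaves implicit.
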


\begin{proof} 

\smallskip

(a) By the Hodge-Morrey decomposition (see \cite{schwarz}) if $\xi$ is an exact $p-$form, there is a unique co-exact (hence co-closed)  $(p-1)-$form $\omega$, called the {\it canonical primitive of $\xi$}, which satisfies:
$$
\twosystem
{d\omega=\xi}
{i_N\omega=0\quad\text{on $\Sigma$.}}
$$
We use $\omega$ as a test-form for the eigenvalue $\nu_{1,p-1}(\Omega)$ and then
$$
\nu_{1,p-1}(\Omega)\int_{\Sigma}\norm{\omega}^2
\leq \int_{\Omega}\norm{d\omega}^2.
$$
By the Stokes formula $\int_{\Omega}\norm{d\omega}^2=-\int_{\Sigma}\dotp{i_Nd\omega}{\starred J\omega}$; by the Schwarz inequality
$$
\Big(\int_{\Omega}\norm{d\omega}^2\Big)^2
\leq\int_{\Sigma}\norm{i_Nd\omega}^2\cdot\int_{\Sigma}\norm{\omega}^2.
$$
Eliminating $\int_{\Sigma}\norm{\omega}^2$ from the previous two inequalities we get
$$
\nu_{1,p-1}(\Omega)\int_{\Omega}\norm{d\omega}^2
\leq \int_{\Sigma}\norm{i_Nd\omega}^2,
$$
which is the assertion. We remark that the equality holds if and only if the canonical primitive of $\xi$ is an eigenform of $T^{[p-1]}$ associated to 
$\nu_{1,p-1}(\Omega)$.

\smallskip

(b) If $\xi$ is an exact harmonic field of degree $1$, then $\xi=df$ for an harmonic function $f$. We can assume that $f$ integrates to zero on $\Sigma$, and 
so we can use $f$ as a test function for the eigenvalue $\nu_{2,0}(\Omega)$. The rest of the proof is as in (a).

\smallskip

(c) Let $\xi$ be a co-exact $p-$harmonic field. Then $\star\xi$ is an exact $(n-p+1)-$harmonic field and we can apply (a) to it. The inequality follows 
because $\norm{i_N\star\xi}^2=\norm{\star_\Sigma\starred J\xi}^2=\norm{\starred J\xi}^2$. If the equality holds, then the canonical primitive of $\star\xi$ is an 
eigenform of $T^{[n-p]}$ associated to $\nu_{1,n-p}(\Omega)$. 

\smallskip

We can also characterize the equality by duality, as follows. If $\xi$ is co-exact, it has a unique {\it canonical co-primitive}, that is, a unique exact
$(p+1)$-form $\alpha$ such that:
$$
\twosystem{\delta\alpha=\xi}
{\starred J\alpha=0\quad\text{on}\quad\Sigma.}
$$
It is clear that if we have equality then $\alpha$  is an eigenform of the dual operator $T^{[p]}_D$ associated to $\nu_{1,p}^D(\Omega)=\nu_{1,n-p}(\Omega)$. That is,
$$
\starred J(\delta\alpha)=\nu_{1,n-p}(\Omega)i_N\alpha.
$$
\end{proof}
We remark that if $H^p(\Omega)=0$ (resp. $H^p_R(\Omega)=0$) then any $p-$harmonic field is automatically exact (resp. co-exact). Therefore, as at any point 
of the boundary one has $\norm{\xi}^2=\norm{\starred J\xi}^2+\norm{i_N\xi}^2$, we have, summing the two inequalities of the Proposition:
\begin{cor}\label{corL2}
Assume that $H^p(\Omega)=H^p_R(\Omega)=0$. Let $\xi$ be a harmonic field of degree $p$.

\item (a) If $p=2,\dots,n$ then 
$\nu_{1,p-1}(\Omega)+\nu_{1,n-p}(\Omega)\leq\int_{\Sigma}\norm{\xi}^2/\int_{\Omega}\norm{\xi}^2$. If $\xi$ is parallel then it has constant norm and
$$
\nu_{1,p-1}(\Omega)+\nu_{1,n-p}(\Omega)\leq\isoper.
$$
\item (b) If $p=1$ then 
$\nu_{2,0}(\Omega)+\nu_{1,n-1}(\Omega)\leq\int_{\Sigma}\norm{\xi}^2/\int_{\Omega}\norm{\xi}^2$.

\item (c) In particular, if $H^1_R(\Omega)=0$ and $f$ is any harmonic function then
$$
\nu_{2,0}(\Omega)+\nu_{1,n-1}(\Omega)\leq\dfrac{\int_{\Sigma}\norm{df}^2}
{\int_{\Omega}\norm{df}^2}.
$$
\end{cor}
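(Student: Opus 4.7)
The corollary follows essentially at once from Proposition \ref{L2}, combined with the cohomological decomposition of harmonic fields announced in the remark preceding the corollary. I would begin by recalling that fact: when $H^p(\Omega)$ vanishes, every $p$-harmonic field is exact, and when $H^p_R(\Omega)$ vanishes, every $p$-harmonic field is co-exact. Both assertions are consequences of the Hodge-Morrey-Friedrichs decomposition (see Schwarz \cite{schwarz}) and should simply be cited rather than reproved.

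For part (a), a harmonic field $\xi$ of degree $p\in\{2,\dots,n\}$ satisfying both vanishing hypotheses is thus simultaneously exact and co-exact. Applying Proposition \ref{L2}(a) yields $\nu_{1,p-1}(\Omega)\int_\Omega \norm{\xi}^2 \leq \int_\Sigma\norm{i_N\xi}^2$, and Proposition \ref{L2}(c) yields $\nu_{1,n-p}(\Omega)\int_\Omega \norm{\xi}^2 \leq \int_\Sigma\norm{\starred J \xi}^2$. Adding these inequalities and using the pointwise orthogonal decomposition $\norm{\xi}^2 = \norm{\starred J \xi}^2 + \norm{i_N\xi}^2$, valid at each point of $\Sigma$, gives the desired quotient estimate. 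If $\xi$ is moreover parallel then $d\norm{\xi}^2 = 2\langle \nabla\xi,\xi\rangle = 0$, so $\norm{\xi}$ is constant on $\Omega$ and the ratio $\int_\Sigma\norm{\xi}^2/\int_\Omega\norm{\xi}^2$ collapses to $\isoper$.

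Part (b) is the same argument in the special case $p=1$, using Proposition \ref{L2}(b) in place of (a); the jump from $\nu_{1,0}$ to $\nu_{2,0}$ reflects only that the canonical primitive, a harmonic function, must be chosen orthogonal to the constants. For part (c) I would take $\xi = df$ for $f$ harmonic: this is automatically an exact $1$-harmonic field, so only the co-exact hypothesis $H^1_R(\Omega) = 0$ is required to apply part (b) to this $\xi$, and the conclusion is immediate. There is no genuine obstacle here, as the corollary is a bookkeeping exercise on top of Proposition \ref{L2}; the only point demanding care is keeping straight which cohomology controls exactness versus co-exactness, so that part (c) is not saddled with the spurious hypothesis $H^1(\Omega) = 0$.
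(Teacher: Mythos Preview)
Your proposal is correct and matches the paper's own argument essentially line for line: the paper too cites the cohomological remark (vanishing of $H^p(\Omega)$, resp.\ $H^p_R(\Omega)$, forces exactness, resp.\ co-exactness), sums the two inequalities of Proposition~\ref{L2}, and uses the pointwise splitting $\norm{\xi}^2=\norm{\starred J\xi}^2+\norm{i_N\xi}^2$ on $\Sigma$. Your handling of part~(c)---noting that $df$ is automatically exact so only $H^1_R(\Omega)=0$ is needed---is exactly the subtlety the statement encodes.
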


On the other hand, the volume form of $\Omega$ is parallel, exact and has degree $n+1$. Then it follows directly from the first point of the Proposition 
\ref{L2} that, for all compact manifolds  with boundary, one has the sharp bound:
\begin{equation}\label{universal}
\nu_{1,n}(\Omega)\leq\isoper.
\end{equation}

We have equality in \eqref{universal} when $\Omega=\ball$ is the unit Euclidean ball: in fact $\vol(\Sigma)/\vol(\Omega)=n+1$ and by the main lower bound (Theorem \ref{lower}) we have $\nu_{1,n}(\ball)\geq n+1$. So 
$\nu_{1,n}(\ball)=n+1$.
We will reprove \eqref{universal} and discuss its equality case in Section \ref{HD}. 

\smallskip

We end this section with the following calculation.
\begin{prop} 
We have  $\nu_{1,p}(\ball)=p+1$ for all $p\geq\frac{n+1}2$.
\end{prop}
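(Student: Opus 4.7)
The plan is to combine the sharp lower bound from Theorem \ref{lowerbound}(b) with two different upper bounds, using a downward induction on $p$ starting from $p=n$.

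First, I would record the geometric data for $\Omega = \mathbf{B}^{n+1}$: every principal curvature of $\Sigma = \mathbf{S}^n$ equals $1$, so $\sigma_p(\Sigma) = p$ for every $p$, and the Bochner term $W^{[p]}$ vanishes identically since Euclidean space is flat. Consequently, Theorem \ref{lowerbound}(b) applies whenever $p \geq (n+1)/2$ and yields
\begin{equation*}
\nu_{1,p}(\mathbf{B}^{n+1}) \geq \frac{p+1}{p}\,\sigma_p(\Sigma) = p+1.
\end{equation*}
So the whole task reduces to proving the matching upper bound $\nu_{1,p}(\mathbf{B}^{n+1}) \leq p+1$ for every such $p$.

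For the top degree $p=n$, I would invoke the universal isoperimetric estimate \eqref{universal}, which gives
\begin{equation*}
\nu_{1,n}(\mathbf{B}^{n+1}) \leq \frac{\vol(\mathbf{S}^n)}{\vol(\mathbf{B}^{n+1})} = n+1,
\end{equation*}
matching the lower bound and establishing the base case $\nu_{1,n}(\mathbf{B}^{n+1}) = n+1$.

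For $(n+1)/2 \leq p < n$, I would argue by downward induction. Assume $\nu_{1,p}(\mathbf{B}^{n+1}) = p+1$ for some $p$ with $p-1 \geq (n+1)/2$. Theorem \ref{consecutive}(i), applied to the Euclidean domain $\mathbf{B}^{n+1}$ with $\sigma_p(\mathbf{S}^n)/p = 1$, yields
\begin{equation*}
\nu_{1,p-1}(\mathbf{B}^{n+1}) \leq \nu_{1,p}(\mathbf{B}^{n+1}) - 1 = p.
\end{equation*}
Combined with the lower bound $\nu_{1,p-1}(\mathbf{B}^{n+1}) \geq p$ (which is available because $p-1 \geq (n+1)/2$), this gives $\nu_{1,p-1}(\mathbf{B}^{n+1}) = p$, completing the induction. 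There is no serious obstacle here: the argument is a direct assembly of results already proved in the paper, and the only point requiring care is checking that the hypothesis $p \geq (n+1)/2$ is preserved at each downward step so that Theorem \ref{lowerbound}(b) remains applicable.
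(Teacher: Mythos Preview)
Your proof is correct and follows essentially the same route as the paper: the lower bound from Theorem~\ref{lowerbound}(b), the isoperimetric upper bound \eqref{universal} at top degree, and downward induction via Theorem~\ref{consecutive}(i). The only cosmetic difference is that the paper first derives $\nu_{1,p}(\ball)\leq p+1$ for \emph{all} $p$ by induction and then invokes the lower bound, whereas you establish equality at each inductive step; the content is the same.
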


\begin{proof} Let $\Omega=\ball$ and let $\nu_{1,p}=\nu_{1,p}(\ball)$. We just observed that $\nu_{1,n}=n+1$. We now use Theorem \ref{consecutive}; 
as $\sigma_p(\Sigma)/p=1$ for all $p$, we see that $\nu_{1,p}\geq \nu_{1,p-1}+1$. Then $\nu_{1,n-1}\leq n$ and, by induction, $\nu_{1,p}\leq p+1$ for all $p$. 
However, when $p\geq (n+1)/2$, Theorem \ref{lowerbound} applied to $\Omega$ gives $\nu_{1,p}\geq p+1$ and so $\nu_{1,p}=p+1$.
\end{proof}

For later use, we observe the following 
\begin{prop}\label{linfunc}
Assume that $\Omega$ supports a non constant linear function, that is, a smooth function $f$ with $df$ non trivial and parallel. If $H^1_R(\Omega)=0$, then:
$$
\nu_{2,0}(\Omega)+\nu_{1,n-1}(\Omega)\leq \isoper.
$$

\item (a) If the equality holds, then $\Sigma$ has constant positive mean curvature $H=\nu_{1,n-1}(\Omega)/n$, and the restriction of $f$ to $\Sigma$ is an 
eigenfunction of $\Delta^{\Sigma}$ associated to the eigenvalue $\lambda\doteq \nu_{2,0}(\Omega)\nu_{1,n-1}(\Omega)$. 

\item (b) If $n=\dim(\Sigma)\geq 3$ and $\Omega\subset\real{n+1}$, then the equality holds if and only if $\Omega$ is a ball.
\end{prop}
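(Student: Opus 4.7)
The inequality is an immediate application of Proposition \ref{L2} to the harmonic field $\xi=df$. Since $df$ is parallel it is automatically exact and, because $H^1_R(\Omega)=0$ forces every closed and co-closed $1$-form to be co-exact via the Hodge--Morrey decomposition, it is also co-exact. Summing parts (b) and (c) of Proposition \ref{L2} applied to $\xi$, and using that $\norm{df}^2$ is constant together with the pointwise splitting $\norm{df}^2=\norm{\starred Jdf}^2+\norm{i_Ndf}^2$ on $\Sigma$, yields the isoperimetric bound.

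For part (a), equality in the sum forces equality in both (b) and (c) of Proposition \ref{L2}. My plan is to extract two independent expressions for $\Delta^\Sigma u$, where $u=f|_\Sigma$ is normalized to have mean zero on $\Sigma$, and then compare them. Equality in (b) asserts that $u$ is a Steklov eigenfunction for $\nu_{2,0}(\Omega)$, so $g:=\partial f/\partial N=-\nu_{2,0}(\Omega)u$. Combining this with the Gauss formula $\nabla^\Sigma_X(\starred Jdf)=\starred J(\nabla_X df)+S(X)^{\star}\wedge i_Ndf$, in which the parallelism $\nabla df=0$ reduces the intrinsic Hessian of $u$ on $\Sigma$ to $g\,S$, yields the first identity $\Delta^\Sigma u=-nH\,g$. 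Equality in (c), rephrased through the dual operator, says that the canonical co-primitive $\alpha$ of $df$ satisfies $\starred J(\delta\alpha)=\nu_{1,n-1}(\Omega)\,i_N\alpha$; applying $\delta^\Sigma$ to this boundary identity and using the other Gauss relation $\nabla^\Sigma_X(i_N\alpha)=i_N\nabla_X\alpha$ (valid since $\starred J\alpha=0$) together with $\delta\alpha=df$ yields the second identity $\Delta^\Sigma u=-\nu_{1,n-1}(\Omega)\,g$.

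Comparing the two expressions gives $(nH-\nu_{1,n-1}(\Omega))\,g\equiv 0$; since $g=-\nu_{2,0}(\Omega)u$ vanishes only on the measure-zero zero set of the non-trivial boundary eigenfunction $u$, continuity forces $nH\equiv\nu_{1,n-1}(\Omega)$, which is both the constancy and positivity of $H$ (the case $\nu_{1,n-1}=0$ would make $u$ a constant Laplacian eigenfunction with eigenvalue $0$, forcing $f$ constant and contradicting $df\neq 0$). Substituting back produces $\Delta^\Sigma u=\nu_{2,0}(\Omega)\nu_{1,n-1}(\Omega)\,u$ as claimed. For part (b) the Euclidean case follows at once: (a) identifies $\Sigma$ as a compact embedded hypersurface in $\real{n+1}$ of constant positive mean curvature, which Alexandrov's theorem forces to be a round sphere (multiple components are ruled out because the inward-pointing mean curvatures of an outer and any inner component would have opposite signs), hence $\Omega$ is a ball; conversely, when $n\geq 3$ one has $n-1\geq(n+1)/2$, so Proposition \ref{ballspec} gives $\nu_{1,n-1}(B^{n+1})=n$, and combined with $\nu_{2,0}(B^{n+1})=1$ and $\isoper=n+1$ this yields equality on the unit ball.

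The main obstacle is the derivation of the second identity for $\Delta^\Sigma u$: unlike the first, which flows directly from the parallelism of $df$ and the Gauss formula, this one requires unpacking the equality case of Proposition \ref{L2}(c) through the dual operator $T^{[1]}_D$ and carefully tracking sign conventions for the various Hodge-star, $\starred J$, and $i_N$ identities when applying $\delta^\Sigma$ to the resulting boundary equation. Once both expressions are in hand the remaining comparison and Alexandrov-type rigidity arguments are essentially mechanical.
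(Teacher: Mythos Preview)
Your proposal is correct and follows essentially the same route as the paper. The only difference is organizational: you derive the two identities $\Delta^\Sigma u=-nHg$ (from the Gauss formula and $\nabla df=0$) and $\Delta^\Sigma u=-\nu_{1,n-1}g$ (from the dual-eigenform condition on the canonical co-primitive, via $\delta^\Sigma i_N\alpha=-i_N\delta\alpha$) and then compare them, whereas the paper first obtains the eigenfunction equation $\Delta^\Sigma f=\nu_{2,0}\nu_{1,n-1}f$ from the co-primitive identity and only afterwards invokes the scalar boundary decomposition $\Delta f=\Delta^\Sigma f-\partial_N^2 f+nH\,\partial_N f$ to extract $nH=\nu_{1,n-1}$; the underlying computations and the use of Alexandrov for part (b) are identical.
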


\begin{proof}
  The inequality follows immediately from $(c)$ of Corollary \ref{corL2} applied to $df$ (which has constant norm by our assumptions). We can assume that $f$ integrates to zero on $\Sigma$. 
  
  $(a)$ If the equality holds, then $f$ has to be a Dirichlet-to-Neumann 
eigenfunction associated to $\nu_{2,0}(\Omega)$:
$$
\derive fN=-\nu_{2,0}(\Omega)f,
$$
and (see the proof of Proposition \ref{L2}) the canonical co-primitive $\alpha$ of $df$, solution of the problem
$$
\twosystem
{\delta\alpha=df,\,\,d\alpha=0}
{J^\star\alpha=0\quad\text{on $\Sigma$}}
$$
must be a dual eigenform associated to $\nu_{1,1}^D(\Omega)=\nu_{1,n-1}(\Omega)$: 
$$
\starred J(\delta\alpha)=\nu_{1,n-1}(\Omega) i_N\alpha.
$$
As  $\starred J(\delta\alpha)=d^{\Sigma}f$ we have
$
d^{\Sigma}f=\nu_{1,n-1}(\Omega)i_N\alpha.
$
It follows that
$$
\begin{aligned}
\Delta^{\Sigma}f&=\delta^{\Sigma}d^{\Sigma}f=\nu_{1,n-1}(\Omega)\delta^{\Sigma}i_N\alpha=-\nu_{1,n-1}(\Omega)i_N\delta \alpha\\
&=-\nu_{1,n-1}(\Omega)\derive fN=\nu_{2,0}(\Omega)\nu_{1,n-1}(\Omega)f,
\end{aligned}
$$
that is, $f$ is an eigenfunction of $\Delta^{\Sigma}$ associated to $\nu_{2,0}(\Omega)\nu_{1,n-1}(\Omega)$, as asserted. Observe that then
$\nu_{1,n-1}(\Omega)>0$ otherwise $f$ would be constant. To prove the first assertion, recall that, for any smooth function on $\Omega$ one has, at all points of 
$\Sigma$:
$$
\Delta f=\Delta^{\Sigma}f-\frac{\bd^2 f}{\bd N^2}+nH\derive fN.
$$
As $\nabla^2f=0$, we have $\Delta f=0$ and $\frac{\bd^2 f}{\bd N^2}=0$, and we easily obtain $nH=\nu_{1,n-1}(\Omega)$.

\smallskip

$(b)$ The equality holds for the Euclidean unit ball, by Proposition \ref{ballspec} (it is known that $\nu_{2,0}({\bf B}^{n+1})=1$).
 Now, if the equality holds, then $\Sigma$ has constant mean curvature by $(a)$, hence 
$\Sigma$ is a sphere by a well-known result of Alexandrov. 
\end{proof}


\subsection{Harmonic domains}\label{HD} 


Recall that the domain $\Omega$ is called \it harmonic \rm if $\derives EN$ is constant on $\Sigma$, where $E$ is the mean-exit time function, solution of the problem $\Delta E=1$ on $\Omega, E=0$ on $\Sigma$. Any ball in a constant curvature space form is harmonic, simply because the mean-exit time function is radially symmetric. We observe the following equivalent condition.

\begin{prop}\label{mefc}
$\Omega$ is harmonic if and only if,  for all harmonic functions $f$ on $\Omega$,  one has:
$$
\dfrac 1{\vol (\Omega)}\int_{\Omega}f=\dfrac1{\vol (\Sigma)}\int_{\Sigma}f
$$
(that is, the mean value of any harmonic function on the domain equals its mean value on the boundary). 
\end{prop}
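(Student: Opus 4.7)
The plan is to use Green's second identity applied to the mean-exit time function $E$ and an arbitrary harmonic function $f$. Since $E$ vanishes on $\Sigma$ and $\Delta E = 1$, the identity
$$
\int_{\Omega}(f\Delta E - E\Delta f) = \int_{\Sigma}\Big(f\derive{E}{N} - E\derive{f}{N}\Big)
$$
simplifies, for $f$ harmonic, to the master formula
$$
\int_{\Omega} f = \int_{\Sigma} f\,\derive{E}{N}.
$$
(One must be a little careful with the sign convention: $N$ is the inner unit normal and $\Delta$ is the geometers' non-negative Laplacian, but these conventions are consistent with $\Delta E=1$ and $E>0$ inside, as one sees on a Euclidean ball.)

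For the forward direction, assume $\derives{E}{N} = c$ is constant on $\Sigma$. Taking $f \equiv 1$ in the master formula yields $\vol(\Omega) = c\,\vol(\Sigma)$, so $c = \vol(\Omega)/\vol(\Sigma)$. Substituting back, for any harmonic $f$,
$$
\int_{\Omega} f \;=\; \frac{\vol(\Omega)}{\vol(\Sigma)}\int_{\Sigma} f,
$$
which is exactly the mean value property.

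For the converse, assume the mean value property. Combining it with the master formula gives
$$
\int_{\Sigma} f\Big(\derive{E}{N} - \frac{\vol(\Omega)}{\vol(\Sigma)}\Big) = 0
$$
for every harmonic function $f$ on $\Omega$. Since the restrictions to $\Sigma$ of harmonic functions on $\Omega$ exhaust $C^{\infty}(\Sigma)$ (every smooth boundary datum admits a harmonic extension, by solvability of the Dirichlet problem), this integral condition forces $\derives{E}{N} - \vol(\Omega)/\vol(\Sigma) = 0$ pointwise on $\Sigma$, i.e.\ $\Omega$ is harmonic. The only delicate step is invoking this density/solvability of the Dirichlet problem, but on a compact Riemannian domain with smooth boundary it is standard, so no real obstacle arises; the whole argument is a clean application of Green's identity together with the characterization of the constant $c$ via $f\equiv 1$.
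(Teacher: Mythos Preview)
Your proof is correct and follows essentially the same approach as the paper: both derive the master formula $\int_{\Omega} f = \int_{\Sigma} f\,\partial E/\partial N$ from Green's identity and handle the forward direction identically. For the converse, the paper tests the resulting integral identity against harmonic extensions of a sequence approximating the Dirac measure at an arbitrary boundary point, while you argue more directly that the restrictions of harmonic functions exhaust $C^{\infty}(\Sigma)$ by solvability of the Dirichlet problem; these are two phrasings of the same density argument, and yours is arguably the cleaner one.
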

\begin{proof} 
Assume that $\Omega$ is harmonic and let $f$ be any harmonic  function on $\Omega$. By the definition of $E$ and the Green formula, we have:
$$
\int_{\Omega}f=\int_{\Omega}f\Delta E=\int_{\Sigma}f\derive{E}{N}.
$$
As $\derives EN$ is constant, say equal to $c$, we have  $\int_{\Omega}f=c\int_{\Sigma}f$. Taking $f=1$ we see that 
$c=\vol(\Omega)/\vol(\Sigma)$ and the 
first half is proved. 

\smallskip

Conversely, assume that the above mean-value property is true for all harmonic functions on $\Omega$ . Fix a point $x\in\Sigma$ and let $f_k\in C^{\infty}(\Sigma)$ be a sequence of  functions converging to 
the Dirac measure of $\Sigma$ at $x$ as $k\to\infty$. Let $\hat f_k$ be the harmonic extension of $f_k$. Then
$
\int_{\Omega}\hat f_k=\int_{\Sigma}f_k\derives{E}{N}
$
and the assumption gives
$$
\dfrac{\vol(\Omega)}{\vol(\Sigma)}\int_{\Sigma}f_k=\int_{\Sigma}f_k\derive{E}{N}
$$
for all $k$. Letting  $k\to\infty$ we obtain
$$
\dfrac{\vol(\Omega)}{\vol(\Sigma)}=\derive{E}{N}(x).
$$
As $x$ is arbitrary, we see that $\derives EN$ is indeed constant on $\Sigma$.
\end{proof}


\subsection{Proof of Theorem \ref{equalityup}}


It is perhaps simpler to reprove the inequality using the dual operator $T^{[0]}_D$, with first eigenvalue $\nu_{1,0}^D(\Omega)=\nu_{1,n}(\Omega)$. So, we need to show that $\nu_{1,0}^D(\Omega)\leq\isoperi$.
Consider the $1-$form $\alpha=dE$. Then $\starred J\alpha=0$ and we can use $\alpha$ as a test-form for $\nu_{1,0}^D(\Omega)$. Since  $i_N\alpha=\derives EN$,  by the variational characterization \eqref{dircar} we get
$$
\nu_{1,0}^D(\Omega)\int_{\Sigma}\left(\derive EN\right)^2\leq\int_{\Omega}\norm{\delta\alpha}^2=\vol(\Omega).
$$
By the Schwarz inequality:
$$
\int_{\Sigma}\left(\derive EN\right)^2\geq\dfrac 1{\vol(\Sigma)}
\left(\int_{\Sigma}\derive EN\right)^2=\dfrac{\vol(\Omega)^2}{\vol(\Sigma)}
$$
and the inequality follows immediately.

\smallskip
If the equality holds then $\derives EN$ must be constant and then $\Omega$ is a harmonic domain.
Conversely,  assume that $\Omega$ is harmonic. Then  the normal derivative of $E$ is constant  along $\Sigma$,  and equals $c=\vol(\Omega)/\vol(\Sigma)$. Let $\alpha=dE$. Then
$$
\twosystem
{\Delta\alpha=0}
{\starred J\alpha=0,\,\,i_N\alpha=\derive EN=c.}
$$
By the definition of $T^{[0]}_D$:
$$
T_D^{[0]}(c)=\starred J(\delta\alpha)=1
$$
because $\delta\alpha=\Delta E=1$. This shows that $1/c$ is an eigenvalue of $T_D^{[0]}$ as asserted, and the associated 
eigenfunction is constant.


\subsection{Hodge-Laplace eigenvalues: proof of Theorem \ref{steklapl}}

           
Fix a degree $p=1,\dots,n$. We assume that $H^p_R(\Omega)=0$, $\min(\sigma_p (\Sigma),\sigma_{n-p+1}(\Sigma))\geq 0$ 
and $W^{[p]}\geq 0$.  We have to show:
\begin{eqnarray}\label{hodge}
\lambda'_{1,p}(\Sigma)\geq\frac{1}{2}\big(\sigma_p(\Sigma)\nu_{1,n-p}(\Omega)+\sigma_{n-p+1}(\Sigma)\nu_{1,p-1}(\Omega)\big).
\end{eqnarray}

Let $\phi$ be a co-exact eigenform associated to $\lambda=\lambda_{1,p-1}''(\Sigma)=\lambda_{1,p}'(\Sigma)$ and consider the exact $p$-eigenform 
$\omega=d^\Sigma\phi$ also associated to $\lambda$. Let  $\hat\phi$ be a solution of
$$
\twosystem
{\Delta\hat\phi=0\quad\text{on}\quad \Omega,}
{\starred J\hat\phi=\phi, \starred J(\delta\hat\phi)=0\quad\text{on}\quad \Sigma,}
$$
which exists by Lemma 3.4.7 in \cite{schwarz}. Then, using the Stokes formula one checks that $\delta d\hat\phi=0$ on $\Omega$
(the extension $\hat\phi$ first appeared  in the paper of  Duff and Spencer  \cite {duff}). 

If we let $\hat\omega=d\hat\phi$, 
then $\hat\omega$ is an exact $p$-harmonic field satisfying:
$$
\twosystem
{d\hat\omega=\delta\hat\omega=0\quad\text{on $\Omega$}}
{J^{\star}\hat\omega=\omega\quad\text{on $\Sigma$}.}
$$
We apply the Reilly formula (\ref{r}) to $\hat\omega$; as $W^{[p]}\geq 0$ and 
$\delta^{\Sigma}(\starred J\hat\omega)=\delta^{\Sigma}d^{\Sigma}\phi=\lambda\phi$ we obtain 
\begin{eqnarray*}
-2\lambda\int_{\Sigma}\scal{i_N\hat\omega}{\phi}
\geq
\int_{\Sigma}\dotp{S^{[p]}(\starred J\hat\omega)}{\starred J\hat\omega}+\dotp{S^{[n-p+1]}(\starred J\star\hat\omega)}{\starred J\star\hat\omega}.
\end{eqnarray*}
The Stokes formula gives:
\begin{eqnarray*}
\int_{\Sigma}\scal{i_N\hat\omega}{\phi}=\int_{\Sigma}\scal{i_Nd\hat\phi}{J^\star\hat\phi}= \int_{\Omega}\scal{\hat\phi}{\delta d\hat\phi}
-\int_{\Omega}\norm{d\hat\phi}^2=-\int_{\Omega}\norm{\hat\omega}^2.
\end{eqnarray*}
 By our curvature assumptions, we end-up with
\begin{eqnarray*}
2\lambda\int_{\Omega}\norm{\hat\omega}^2\geq \sigma_p(\Sigma)\int_{\Sigma}\norm{J^\star\hat\omega}^2+
\sigma_{n-p+1}(\Sigma)\int_{\Sigma}\norm{i_N\hat\omega}^2.
\end{eqnarray*}
The $p$-harmonic field $\hat\omega$ is exact, and also co-exact because $H^p_R(\Omega)=0$. We can then apply Proposition \ref{L2} to estimate the boundary 
integrals in the right hand side, and the estimate \eqref{hodge} follows.


\subsection{Proof of Theorem \ref{esc}}


Let $\lambda_1(\Sigma)$ be the first positive eigenvalue of the Laplacian on functions of $\Sigma$. We assume that $\Omega$ has nonnegative Ricci curvature and that $\Sigma$ is strictly convex, with principal curvatures bounded below by $\sigma_1(\Sigma)>0$. We have to show that
\begin{equation}\label{esc1}
\lambda_1(\Sigma)\geq \dfrac12\left(\sigma_1(\Sigma)\nu_{1,n-1}(\Omega)+nH\nu_{2,0}(\Omega)\right).
\end{equation}
Moreover, if $n=\dim (\Sigma)\geq 3$, the equality holds if and only if $\Omega$ is a Euclidean ball.

\begin{proof}

Let $\phi$ be an eigenfunction associated to $\lambda_1(\Sigma)$, $\hat\phi$ its harmonic extension to $\Omega$ and $\hat\omega=d\hat\phi$. Then $\hat\omega$ is an 
harmonic field of degree $1$. We apply the Reilly formula to $\hat\omega$; as 
$\norm{\nabla\hat\omega}^2\geq 0$ and $\sigma_n(\Sigma)=nH$, we obtain:
\begin{eqnarray*}
2\lambda\int_{\Omega}\norm{\hat\omega}^2\geq \sigma_1(\Sigma)\int_{\Sigma}\norm{J^\star\hat\omega}^2+
nH\int_{\Sigma}\norm{i_N\hat\omega}^2.
\end{eqnarray*}
Note that, if the equality holds, then $\hat\omega$ must be parallel. Our curvature assumptions imply in particular that $H^1_R(\Omega)=0$.
 Therefore we can apply Proposition \ref{L2} and obtain
\begin{eqnarray}\label{eq}
\int_{\Sigma}\norm{\starred J\hat\omega}^2\geq \nu_{1,n-1}(\Omega)\int_{\Omega}
\norm{\hat\omega}^2\quad\text{and}\quad
\int_{\Sigma}\norm{i_N\hat\omega}^2\geq \nu_{2,0}(\Omega)\int_{\Omega}
\norm{\hat\omega}^2.
\end{eqnarray}
The lower bound \eqref{esc1} follows. 
The estimate is sharp because, for the Euclidean unit ball, we have $\lambda_1(\Sigma)=\lambda_1(\mathbf{S}^n)=n$, $\nu_{2,0}(\ball)=1$ and, for $n\geq 3$,  $\nu_{1,n-1}(\ball)=n$. 

\smallskip

Now assume that \eqref{esc1} is an equality. Then $\hat\omega=d\hat\phi$ is parallel, and we can apply Proposition \ref{linfunc} to $f=\hat\phi$.
However, as we must have equalities in (\ref{eq}), we conclude that 
$$
\isoper =\nu_{2,0}(\Omega)+\nu_{1,n-1}(\Omega),
$$
and we are in the equality case of Proposition \ref{linfunc}. So the mean curvature is constant: $nH=\nu_{1,n-1}(\Omega)$ 
and $\lambda_1(\Sigma)=\nu_{2,0}(\Omega)\nu_{n-1}(\Omega)$. By assumption 
$$
2\lambda_1(\Sigma)=\sigma_1(\Sigma)\nu_{1,n-1}(\Omega)+nH\nu_{2,0}(\Omega)
$$
and we easily obtain $\nu_{2,0}(\Omega)=\sigma_1(\Sigma)$. Now, at each point of $\Sigma$, the mean curvature is always no less than the lowest principal curvature, which  implies that $H\geq\sigma_1(\Sigma)=\nu_{2,0}(\Omega)$.
We arrive at the inequality
$$
\isoper \leq (n+1)H.
$$
By  the result of Ros already cited (\cite{ros}) we know that $\isoperi\geq (n+1)H$ with equality if and if $\Omega$ is a Euclidean ball. Then $\Omega$ must be a Euclidean ball, and the proof is complete. \end{proof}


\subsection {Biharmonic operator: proof of Theorem \ref{munu}}\label{RFSE}


We now consider the fourth order Steklov problem \eqref{biharmonic} and its first eigenvalue $\mu_1(\Omega)$. 
As $\nu_{1,n}(\Omega)=\nu_{1,0}^D(\Omega)$ it is enough to show that
$$
\mu_1(\Omega)\geq\nu_{1,0}^D(\Omega).
$$
Let $f$ be a first eigenfunction associated to $\mu_1(\Omega)$. As $\starred J(df)=0$ we can use $df$ as a test-form in (\ref{dircar}). Then
\begin{eqnarray*}
\nu_{1,0}^D(\Omega)\leq\frac{\int_{\Omega}(\Delta f)^2}{\int_{\Sigma}\Big(\frac{\partial f}{\partial N}\Big)^2}=
\mu_1(\Omega)
\end{eqnarray*}
where the equality follows from the Rayleigh-Ritz characterization of $\mu_1(\Omega)$ (see \cite{ferrero}). 
If equality holds, then $df$ must be an eigenform of $T^{[0]}_D$ associated to $\nu_{1,0}^D(\Omega)$, hence $\Delta df=0$. But then $\Delta f$ is a constant, 
and we can assume $\Delta f=1$. As $f=0$ on $\Sigma$ we see that $f=E$, the mean-exit time function, and the boundary conditions satisfied by $f$ imply that the normal derivative of $E$ 
is constant. Hence $\Omega$ is harmonic.


\section{Appendix}\label{appendix}


Here we state a general result which gives sufficient conditions on a manifold to be isometric with a Euclidean ball. This result 
is used in the proof of Theorem \ref{equalitytwo} but it is perhaps of independent interest. 
\begin{thm}\label{obata}
Let $(\Omega^{n+1},g)$ a compact, connected Riemannian manifold with smooth boundary $\Sigma$. Assume that 
there exist a non-trivial function $f\in C^{\infty}(\Omega)$ and a number  $c>0$ such that: 
$$
\twosystem
{\nabla df=0\quad\text{on}\;\;\Omega}
{\frac{\partial f}{\partial N}=-cf\quad\text{on}\;\;\Sigma.}
$$
If $\Omega$ has non-negative sectional curvature and the second fundamental form of $\Sigma$ satisfies $S\geq c$,  
then $\Omega$ is isometric with a Euclidean ball. 
\end{thm}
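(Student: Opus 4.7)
The plan is to establish that $\Omega$ is a Euclidean ball by combining the strong structural rigidity forced by $\nabla df = 0$ with the isoperimetric rigidity theorem of Ros \cite{ros}.

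I first set $a := \norm{\nabla f}$. Because $\nabla df = 0$ makes $\nabla f$ parallel, $a$ is a positive constant (non-triviality of $f$ together with the boundary condition rules out $a=0$), and $f$ is harmonic. Decomposing $\nabla f = \nabla^\Sigma f - cfN$ along $\Sigma$ and using $\norm{\nabla f}^2 = a^2$ yields $\norm{\nabla^\Sigma f}^2 + c^2 f^2 = a^2$ on $\Sigma$, and the divergence theorem applied to $f\nabla f$ produces $\int_\Sigma f^2 = a^2\Vol(\Omega)/c$.

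Next I differentiate $\nabla f = \nabla^\Sigma f - cfN$ in the direction of a tangent vector $X \in T\Sigma$. Using $\nabla_X \nabla f = 0$ and the Gauss formula for $\Sigma \subset \Omega$, the tangential and normal parts produce
\[
\mathrm{Hess}^\Sigma f = -cf\,\mathrm{II}, \qquad S(\nabla^\Sigma f) = c\,\nabla^\Sigma f,
\]
where $\mathrm{II}$ and $S$ are the second fundamental form and shape operator of $\Sigma$. Tracing the Hessian identity yields $\Delta^\Sigma f = cnHf$, and integration by parts on the closed manifold $\Sigma$ gives
\[
cn\int_\Sigma H f^2 = a^2\bigl(\Vol(\Sigma) - c\,\Vol(\Omega)\bigr).
\]
Combined with the pointwise bound $H \geq c$, this gives the Ros-type isoperimetric inequality $\Vol(\Sigma) \geq c(n+1)\Vol(\Omega)$, with equality iff $H \equiv c$ on the full-measure set $\{f \neq 0\}$. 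Since $\mathrm{sec}^\Omega \geq 0$ implies $\mathrm{Ric}^\Omega \geq 0$ and $H \geq c > 0$, Ros's theorem applies: $\int_\Sigma 1/H \geq (n+1)\Vol(\Omega)$ with equality iff $\Omega$ is isometric to a Euclidean ball. Combining with $1/H \leq 1/c$ gives the chain $(n+1)\Vol(\Omega) \leq \int_\Sigma 1/H \leq \Vol(\Sigma)/c$, and the proof reduces to showing this chain is sharp, equivalently that $\Sigma$ is umbilical with all principal curvatures equal to $c$.

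To force sharpness I use Bochner's formula twice. Because $\nabla f$ is parallel, $R^\Omega(X,Y)\nabla f \equiv 0$, which combined with the curvature symmetries yields $R^\Omega(\nabla f, \cdot, \cdot, \cdot) \equiv 0$. In particular $\mathrm{Ric}^\Omega(\nabla f, \cdot) = 0$ and $R^\Omega(N, \nabla^\Sigma f, N, \nabla^\Sigma f) = 0$, so the Gauss equation for $\Sigma$ applied at $\nabla^\Sigma f = \nabla f + cfN$ collapses to
\[
\mathrm{Ric}^\Sigma(\nabla^\Sigma f, \nabla^\Sigma f) = c^2 f^2\,\mathrm{Ric}^\Omega(N,N) + c(nH - c)\norm{\nabla^\Sigma f}^2.
\]
On the closed manifold $\Sigma$, Bochner applied to $f|_\Sigma$ reads $\int_\Sigma(\Delta^\Sigma f)^2 = \int_\Sigma \norm{\mathrm{Hess}^\Sigma f}^2 + \int_\Sigma \mathrm{Ric}^\Sigma(\nabla^\Sigma f, \nabla^\Sigma f)$. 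Substituting the boundary identities and the Gauss expression, and using the elementary inequality $n^2H^2 - \norm{\mathrm{II}}^2 = 2\sum_{i<j}\lambda_i\lambda_j \geq n(n-1)c^2$ (with equality iff $\mathrm{II} = c\,g^\Sigma$) together with $\mathrm{Ric}^\Omega(N,N) \geq 0$, the chain of inequalities collapses and forces pointwise equality $\mathrm{II} = c\,g^\Sigma$ on $\Sigma$. Then $H \equiv c$, the isoperimetric inequality is saturated, and Ros's rigidity theorem concludes that $\Omega$ is isometric to a Euclidean ball of radius $1/c$.

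The main obstacle is the last cascade step. Because the Bochner identity on $\Sigma$ is an equation and the algebraic estimates on both sides only supply lower bounds of matching magnitude once the isoperimetric inequality is nearly saturated, one must carefully track the direction of each pointwise estimate and exploit the ``flat-direction'' information $R^\Omega(\nabla f, \cdot, \cdot, \cdot) \equiv 0$ through the Gauss equation in order to force equality everywhere. Without this delicate bookkeeping the elementary estimate $\norm{\mathrm{II}}^2 \geq nH^2$ is too weak to close the argument.
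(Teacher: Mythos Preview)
Your derivation up through the isoperimetric inequality $\Vol(\Sigma)\geq (n+1)c\,\Vol(\Omega)$ is correct, and the boundary identities $\mathrm{Hess}^\Sigma f=-cf\,\mathrm{II}$ and $S(\nabla^\Sigma f)=c\,\nabla^\Sigma f$ are cleanly obtained. The gap is precisely where you locate it yourself: the Bochner step on $\Sigma$ does not close. Writing out the integrated Bochner identity with your Gauss expression for $\mathrm{Ric}^\Sigma(\nabla^\Sigma f,\nabla^\Sigma f)$ gives
\[
c^2\int_\Sigma f^2\bigl(n^2H^2-\norm{\mathrm{II}}^2\bigr)
= c^2\int_\Sigma f^2\,\mathrm{Ric}^\Omega(N,N)
+ c\int_\Sigma (nH-c)\,\norm{\nabla^\Sigma f}^2,
\]
and each of the three estimates you list ($n^2H^2-\norm{\mathrm{II}}^2\geq n(n-1)c^2$, $\mathrm{Ric}^\Omega(N,N)\geq 0$, $nH-c\geq (n-1)c$) is a \emph{lower} bound. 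An equation between two quantities, both bounded below by the same number, forces nothing. Concretely, after setting $\mu_i=\lambda_i-c\geq 0$ and inserting your formulas for $\int_\Sigma f^2$ and $\int_\Sigma Hf^2$, the identity reduces to
\[
nca^2D + 2c\int_\Sigma f^2\sum_{i<j}\mu_i\mu_j
= c\int_\Sigma f^2\,\mathrm{Ric}^\Omega(N,N) + a^2\int_\Sigma\sum_i\mu_i,
\]
with $D=\Vol(\Sigma)-(n+1)c\,\Vol(\Omega)\geq 0$. The right-hand side carries the new unknown $\int_\Sigma\sum_i\mu_i=n\int_\Sigma(H-c)$, which is not determined by any of your earlier relations (only the $f^2$-weighted version $\int_\Sigma f^2\sum_i\mu_i=\frac{a^2}{c}D$ is) and can be large even when all other terms are small. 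No combination of the stated pointwise inequalities and the flat-direction information $R^\Omega(\nabla f,\cdot,\cdot,\cdot)=0$ supplies the missing \emph{upper} bound needed to force $\mu_i\equiv 0$; your final paragraph essentially concedes this.

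The paper's proof avoids this obstruction entirely by a different mechanism. It shows (i) $\Sigma$ is connected, (ii) $\mathrm{Ric}^\Sigma\geq (n-1)c^2$ via the Gauss equation and $S\geq c$, and (iii) $\mathrm{diam}(\Sigma)\geq \pi/c$ by an ODE argument along a minimizing $\Sigma$-geodesic joining the extreme points of $f|_\Sigma$, using precisely your relation $\norm{\nabla^\Sigma f}^2=a^2-c^2f^2$. Myers' theorem then pins the diameter at $\pi/c$, Cheng's rigidity identifies $\Sigma$ with a round sphere of radius $1/c$, and a theorem of Xia finishes the identification of $\Omega$. Step (iii) is what provides the two-sided control that your Bochner approach lacks.
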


\begin{proof}
It is enough to prove that the boundary is isometric to a round sphere. Then, by Theorem $1$ in \cite{xia}, we conclude that $(\Omega^{n+1},g)$ is isometric 
with a Euclidean ball.\\ Here are the main steps. We prove that:

\parte a $\Sigma$ is connected.

\parte b ${\rm Ric}^\Sigma\geq (n-1)c^2$.

\parte c $\Sigma$ has diameter greater than or equal to $\frac{\pi}{c}$.

\medskip

The proof of the Theorem will follow by observing that, by Myers' theorem and a), b), one has
${\rm diam}(\Sigma)\leq \frac{\pi}{c}$; hence, by c), the diameter is equal to $\frac{\pi}{c}$. By the rigidity theorem of Cheng \cite{cheng}, 
$\Sigma$ is isometric to a sphere of radius $1/c$, as asserted.

We prove a). Looking at the long exact sequence of the pair $(\Omega,\Sigma)$, it is enough to show that 
$H^1_R(\Omega)=0$: in fact, in that case $H^0(\Sigma)\sim H^0(\Omega)\sim\reals$. Now the Ricci curvature of $\Omega$ is non-negative
and the mean curvature of $\Sigma$ is bounded below by $c>0$: by Theorem
\ref{lower} we have $\nu_{1,n}(\Omega)>0$ and then $H^n(\Omega)=H^1_R(\Omega)=0$. 
\smallskip

We prove b). It is enough to prove that, for any unit length tangent vector $X\in T\Sigma$, one has ${\rm Ric}^\Sigma(X,X)\geq (n-1)c^2$. The Gauss lemma and 
the non-negativity of the sectional curvatures of $\Omega$ give:
$$
{\rm Ric}^\Sigma(X,X)\geq nH\scal{S(X)}{X}-\abs{S(X)}^2.
$$
Fix an orthonormal frame $(e_1,\dots,e_n)$ of principal directions, so that $S(e_j)=\eta_je_j$ for all $j$. Then:
$$
{\rm Ric}^\Sigma(X,X)\geq \sum_{j=1}^n(\eta_j(nH-\eta_j))\scal{X}{e_j}^2;
$$
as $\eta_j\geq c$ for all $j$ one sees that $\eta_j(nH-\eta_j)\geq (n-1)c^2$ for all $j$ and the assertion follows.

\smallskip

We finally prove c). Since $\nabla f$ is parallel we have that $\abs{\nabla f}$ is constant on $\Omega$, and we can assume that it is equal to $1$.
The restriction of $f$ is continuous on $\Sigma$, which is compact: then let $p_+\in\Sigma$ (resp. $p_-\in\Sigma$) be a point where the restriction of $f$ 
is maximum (resp. minimum). We prove $d(p_-,p_+)\geq \frac{\pi}{c}$. Now:
\begin{eqnarray*}
 1 =  |\nabla f|^2(p_\pm) & = & |\nabla^\Sigma f|^2(p_\pm)+\Big(\frac{\partial f}{\partial N}\Big)^2(p_\pm)\\
 & = & c^2f(p_\pm)^2.
\end{eqnarray*}
The function  $f$ is not constant on $\Sigma$ (because it is harmonic on $\Omega$ and $c>0$) therefore:
$$
f(p_+)=\frac{1}{c}, \quad f(p_-)=-\frac{1}{c}.
$$
As $\Sigma$ is connected, there exists a minimizing geodesic $\gamma:[0,l]\rightarrow \Sigma$
parametrized by arc length and joining $p_-$ with $p_+$. So we have $\gamma(0)=p_-$, $\gamma(l)=p_+$ and the distance from 
$p_-$ to $p_+$ is $l$. It is now enough to prove that $l\geq\frac{\pi}{c}$.

\smallskip

Let $\alpha(t):=f\circ\gamma(t)$ for $t\in[0,l]$, so that $\alpha'(t)\leq |\nabla^\Sigma f(\gamma(t))|$. Since 
$\nabla f$ has unit length we have:
$1=|\nabla^\Sigma f|^2\big(\gamma(t)\big)+c^2\alpha(t)^2$
and therefore
$$
|\alpha'(t)|^2\leq 1-c^2\alpha(t)^2.
$$ 
Fix $\varepsilon>0$ and let $A=\{t\in [0,l]:\alpha'(t)>0\}$. Then:
\begin{eqnarray*}
l\geq \int_Adt\geq \int_A\frac{\alpha'(t)dt}{\sqrt{1-c^2\alpha(t)^2}+
\varepsilon}\geq\int_0^l\dfrac{\alpha'(t)dt}{\sqrt{1-c^2\alpha(t)^2}+\varepsilon}.
\end{eqnarray*}
Changing variables and observing that $\alpha(0)=-\frac{1}{c}$ and $\alpha(l)=\frac{1}{c}$ we have
\begin{eqnarray*}
l\geq\frac{1}{c}\int_{-1}^1\frac{dx}{\sqrt{1-x^2}+\varepsilon}.
\end{eqnarray*}
Letting $\varepsilon\rightarrow 0^+$ gives $l\geq\frac{\pi}{c}$, as asserted.
\end{proof}

Finally, we remark that the conclusion of the Theorem holds also if the assumption on the non-negativity  of the sectional curvature is replaced by the following assumptions: the Ricci curvature of $\Omega$ is non-negative, and the mean curvature of $\Sigma$ is constant. We omit the details.



\vspace{0.8cm}     
Authors addresses:     
\nopagebreak     
\vspace{5mm}\\     
\parskip0ex     
\vtop{\hsize=6cm\noindent\obeylines}     
\vtop{     
\hsize=8cm\noindent     
\obeylines     
Simon Raulot
Laboratoire de Math\'ematiques R. Salem
UMR $6085$ CNRS-Universit\'e de Rouen
Avenue de l'Universit\'e, BP.$12$
Technop\^ole du Madrillet
$76801$ Saint-\'Etienne-du-Rouvray, France}     
     
\vspace{0.5cm}     
     
E-Mail:     
{\tt simon.raulot@univ-rouen.fr }  

\vtop{\hsize=6cm\noindent\obeylines}     
\vtop{     
\hsize=9cm\noindent     
\obeylines     
Alessandro Savo
Dipartimento SBAI, Sezione di Matematica 
Sapienza Universit\`a di Roma
Via Antonio Scarpa 16
 00161 Roma, Italy         
}     
     
\vspace{0.5cm}     
     
E-Mail:     
{\tt savo@dmmm.uniroma1.it  } 



\end{document}